
\documentclass[reqno,12pt]{amsart} 
\usepackage{latexsym}
\usepackage{amsxtra}
\usepackage{verbatim}
\usepackage{color}
\usepackage{amsthm,amsmath,amsfonts,amssymb,amscd,mathrsfs,graphics}
\usepackage{times,float}
\usepackage{appendix}
\usepackage{txfonts}
\usepackage{hyperref,supertabular}
\usepackage{ifpdf}
\usepackage{enumerate}
\usepackage{pst-tree}
\usepackage{fancyhdr}

\usepackage[top=1.5in, bottom=1.5in, left=1.5in, right=1.5in]{geometry}

\allowdisplaybreaks


\newtheorem{theorem}{Theorem}[section]
\newtheorem{lemma}[theorem]{Lemma}
\newtheorem{proposition}[theorem]{Proposition}
\newtheorem{corollary}[theorem]{Corollary}

\newcommand{\leftexp}[2]{{\vphantom{#2}}^{{\rm #1}}{#2}}
\newcommand{\leftbase}[2]{{\vphantom{#2}}_{{\rm #1}}{#2}}


\newcommand{\A}{\mathscr{A}}
\newcommand{\D}{\mathcal{D}}
\def\d{\partial}

\def\la{\lambda}
\def\si{\sigma}
\def\Si{\Sigma}
\def\ge{\epsilon}

\newcommand{\q}{\mathbf{q}}

\newcommand{\LD}{\Big\langle}
\newcommand{\RD}{\Big\rangle}

\makeatletter

\newcommand{\Rmnum}[1]{\expandafter\@slowromancap\romannumeral #1@}
\makeatother

\theoremstyle{definition}

\newcommand{\beq}{\begin{equation}}
\newcommand{\eeq}{\end{equation}}
\newcommand{\bqa}{\begin{eqnarray}}
\newcommand{\eqa}{\end{eqnarray}}
\newcommand{\beqa}{\begin{equation*}}
\newcommand{\ben}{\begin{eqnarray*}}
\newcommand{\eeqa}{\end{equation*}}
\newcommand{\een}{\end{eqnarray*}}

\newcommand{\X}{\mathcal{X}}

\newcommand{\B}{\mathcal{B}}
\newcommand{\C}{\mathbb{C}}

\renewcommand{\H}{\mathcal{H}}

\newcommand{\M}{\mathcal{M}}

\renewcommand{\O}{\mathcal{O}}

\renewcommand{\S}{\mathcal{S}}
\renewcommand{\X}{\mathcal{X}}
\newcommand{\T}{\mathcal{T}}

\newcommand{\Z}{\mathbb{Z}}

\newcommand{\s}{\mathbf{s}}

\newcommand{\f}{\mathbf{f}}
\newcommand{\m}{\mathbf{m}}

\renewcommand{\r}{\mathbf{r}}
\renewcommand{\t}{\mathbf{t}}
\newcommand{\x}{\mathbf{x}}
\newcommand{\y}{\mathbf{y}}

\newcommand{\lieh}{\mathfrak{h}}

\title[Modular groups and simple elliptic singularities]{The modular
  group for the total ancestor potential of Fermat simple elliptic
  singularities}
\author{Todor Milanov}
\author{Yefeng Shen}
\address{Kavli Institute for the Physics and Mathematics of the Universe (WPI),
Todai Institutes for Advanced Study, The University of Tokyo, Kashiwa, Chiba 277-8583, Japan}
\email{todor.milanov@ipmu.jp}

\address{Kavli Institute for the Physics and Mathematics of the Universe (WPI),
Todai Institutes for Advanced Study, The University of Tokyo, Kashiwa, Chiba 277-8583, Japan}
\email{yefeng.shen@ipmu.jp}

\begin{document}

\subjclass[2010]{Primary 14N35; Secondary 33C75, 33C05}

\begin{abstract}
In a series of papers \cite{KS,MR}, Krawitz, Milanov, Ruan, and Shen have verified the
so-called Landau-Ginzburg/Calabi-Yau (LG/CY) correspondence for simple
elliptic singularities $E_N^{(1,1)}$ ($N=6,7,8$). As a byproduct it
was also proved that the orbifold Gromov--Witten invariants of the
orbifold projective lines $\mathbb{P}^1_{3,3,3}$,
$\mathbb{P}^1_{4,4,2}$, and  $\mathbb{P}^1_{6,3,2}$ are quasi-modular
forms on an appropriate modular group. While the modular group for
$\mathbb{P}^1_{3,3,3}$ is $\Gamma(3)$, the modular
groups in the other two cases were left unknown. The goal of this
paper is to prove that the modular groups in the remaining two cases
are respectively $\Gamma(4)$ and $\Gamma(6)$.
\end{abstract}
\maketitle
\tableofcontents
\addtocontents{toc}{\protect\setcounter{tocdepth}{1}}

\section{Introduction}
Let $W(\x)=x_1^{a_1}+x_2^{a_2}+x_3^{a_3}$ be a Fermat polynomial whose
exponents $(a_1,a_2,a_3)$ are given by one of the following triples
$(3,3,3)$, $(4,4,2)$, or $(6,3,2)$. Here we use the notation $\x=(x_1,x_2,x_3)$. Since such a polynomial $W$ defines a hypersurface in
$\C^3$ that has a simple-elliptic singularity at $\x={\bf 0}\in\C^3$, we will
sometimes refer to it as an {\em elliptic} Fermat polynomial. We assign
weights $q_i=1/a_i$ to each variable $x_i$, so that $W$ becomes a
quasi-homogeneous polynomial of degree 1.

\subsection{Formulation of the main results}
Let $H=\C[x_1,x_2,x_3]/(W_{x_1},W_{x_2},W_{x_3})$ be the Jacobi
algebra of $W$, $W_{x_i}:=\partial W/\partial x_i$. Given a triple $\r=(r_1,r_2,r_3)$  of non-negative
integers we put $\phi_\r(\x)=x_1^{r_1}x_2^{r_2}x_3^{r_3}.$ We choose a
set $\mathfrak{R}$ of exponents $\r$, s.t., the monomials $\phi_\r(\x)$ project to a
basis of $H$. More presicely, put 
\beq\label{exp}
\mathfrak{R}=\{(r_1,r_2,r_3)\ |\ 0\leq r_i\leq a_i-2 \}.
\eeq
It will be convenient also to decompose $\mathfrak{R}=\{\mathbf{0},\m\}\sqcup
\mathfrak{R}_{\rm tw}$, where $\mathbf{0}:=(0,0,0)$, $\m:=(m_1,m_2,m_3)$, $\mathfrak{R}_{\rm tw}$ corresponds to monomials of
non-integral degree and $\phi_\m(\x)$ is a monomial of degree 1.  
Let us denote by $\Si\subset \C$ the set of all {\em marginal}
deformations 
\ben
f(\si,\x)=W(\x)+\si\,\phi_\m(\x),\quad \si \in \Si,
\een
s.t., $f(\si,\x)$ has only one critical point. The hypersurfaces 
\ben
X_{\si,\la}=\{ \x\in \C^3\ |\ f(\si,\x)=\la\}
\een
form a smooth fibration over $\Si\times (\C\setminus{\{0\}})$, while the
homology (resp. cohomology) groups $H_2(X_{\si,\la};\C)$ (resp. 
$H^2(X_{\si,\la};\C)$) form a vector bundle equipped with a flat
Gauss--Manin connection. We fix a reference point, say $(0,1)$, and
let 
\ben
\lieh=H_2(X_{0,1};\C),\quad \lieh^\vee = H^2(X_{0,1};\C)
\een 
be the reference fibers. The parallel transport around $\la=0$ induces
a monodromy transformation $J\in {\rm GL}(\lieh)$, which commutes with
the monodromy action of $\pi_1(\Si)$ on $\lieh$. In 
other words, we have a monodromy representation
\ben
\rho: \pi_1(\Si)\to {\rm GL}(\lieh_0)\oplus {\rm GL}(\lieh_{\neq 0}),
\een 
where $\lieh_0$ is the $J$-invariant subspace and $\lieh_{\neq 0}$ is
the direct sum of all eigenspaces of $J$ with eigenvalue $\neq 0$. Put
$\rho=(\rho_0,\rho_{\neq 0})$ and let 
\ben
\overline{\rho}_{\neq 0}: \pi_1(\Si)\longrightarrow 
 {\rm GL}(\lieh_{\neq 0}\, ) /\langle J\rangle,
\een
the map induced from $\rho_{\neq 0}$.  Using an explicit computation
we will check that  
\beq\label{kernels}
{\rm Ker}\ (\rho_0)\subset {\rm Ker}\ (\overline{\rho}_{\neq 0}).
\eeq
It will be nice if one can find a conceptual explanation and determine
if  \eqref{kernels}  is satisfied for other normal forms $W$ of the
simple elliptic singularities. Using \eqref{kernels} we
get an induced homomorphism
\beq\label{rho-W}
\rho_W:\ 
\widetilde{\Gamma}(W) \longrightarrow 
 {\rm GL}(\lieh_{\neq 0}\, ) /\langle J\rangle,
\eeq
where $\widetilde{\Gamma}(W)={\rm Im}(\rho_0).$ 
Put $\Gamma(W):= {\rm Ker }(\rho_W)$.
\begin{theorem}\label{t1}
The total ancestor potential $\A^W_\si(\hbar;\q)$ of the simple elliptic
singularity $W$ transforms as a quasi-modular form on $\Gamma(W)$.
\end{theorem}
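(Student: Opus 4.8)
The plan is to read off the transformation law from the general fact that the total ancestor potential of a generically semisimple Frobenius manifold is a flat section of the Fock bundle built on Givental's symplectic loop space $\mathcal{H}=\lieh^\vee((z))$, so that its monodromy along $\pi_1(\Si)$ is the \emph{quantized} Gauss--Manin monodromy. Concretely, one has $\A^W_\si=\widehat{S_\si}\,\D^W$, where $\D^W$ is the $\si$-independent total descendant potential of $W$ and $S_\si$ is the calibration of the Frobenius structure attached to the primitive form of $W$; since $S_\si$ is assembled from the periods of the vanishing cohomology, analytic continuation of $S_\si$ along a loop $\gamma\in\pi_1(\Si)$ produces $\rho(\gamma)S_\si$, and hence continuation of $\A^W_\si$ along $\gamma$ produces $\widehat{\rho(\gamma)}\,\A^W_\si$. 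Because the decomposition $\lieh=\lieh_0\oplus\lieh_{\neq 0}$ is compatible with the $z$-grading, the Fock space factors as $\mathfrak{F}_0\otimes\mathfrak{F}_{\neq 0}$ and $\widehat{\rho(\gamma)}=\widehat{\rho_0(\gamma)}\otimes\widehat{\rho_{\neq 0}(\gamma)}$. The first step is to recall this flatness statement and the tensor splitting, and to record that the quantization $\widehat{J}$ of the $\la$-monodromy fixes $\A^W_\si$ --- the reason for the $\langle J\rangle$-quotient in \eqref{rho-W}.

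The second step is the modular interpretation of the rank-two factor. The lattice $\lieh_0$ is canonically the first cohomology of the family of elliptic curves $E_\si=\{f(\si,\cdot)=0\}$ over $\Si$; it carries a unimodular skew pairing preserved by $\rho_0$, so $\widetilde{\Gamma}(W)={\rm Im}(\rho_0)\subset{\rm SL}_2(\Z)$, and the ratio of two periods of $E_\si$ gives a $\widetilde{\Gamma}(W)$-equivariant identification of an appropriate cover of $\Si$ with an open subset of the upper half-plane $\HH$. By \eqref{kernels}, every $\gamma\in{\rm Ker}(\rho_0)$ has $\rho_{\neq 0}(\gamma)\in\langle J\rangle$, so $\widehat{\rho(\gamma)}$ fixes $\A^W_\si$; hence $\A^W_\si$ descends to a single-valued function $\A^W$ of $\tau\in\HH$, the genus parameter $\hbar$, and the variables $\q$, on which $\widetilde{\Gamma}(W)$ acts by $g\mapsto\widehat{\rho_0(g)}\otimes\widehat{\rho_{\neq 0}(\gamma)}$ for any lift $\gamma$ of $g$.

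The third step computes the two factors. For the $\lieh_0$-block, written in the frame of $\mathcal{H}_0$ adapted to the holomorphic limit of the Hodge structure and normalized by a fixed period, the symplectic operator $\rho_0(g)$ for $g=\left(\begin{smallmatrix}a&b\\c&d\end{smallmatrix}\right)\in\widetilde{\Gamma}(W)$ is block triangular, with diagonal part the scalings by $(c\tau+d)^{\pm1}$ and off-diagonal entry controlled by $c$ and $(c\tau+d)$; this off-diagonal term, forced by the logarithmic monodromy of the period of the primitive form, is what makes the answer \emph{quasi}-modular rather than modular. Its quantization $\widehat{\rho_0(g)}$ therefore acts on $\mathfrak{F}_0$ by an explicit power $(c\tau+d)^{-w}$ on each graded piece, together with multiplication by the exponential of a quadratic form in $\q$ whose coefficient lies in $\C[(c\tau+d)^{-1}]$ --- precisely the transformation behavior of a quasi-modular form and its $\tau$-derivatives, the Gaussian factor being the analogue of the quasi-modular anomaly of $E_2$. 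For the $\lieh_{\neq 0}$-block, $\widehat{\rho_{\neq 0}(\gamma)}$ depends on $\gamma$ only through its class $\overline{\rho}_{\neq 0}(\gamma)\in{\rm GL}(\lieh_{\neq 0})/\langle J\rangle$, so by \eqref{kernels} the homomorphism $\rho_W$ of \eqref{rho-W} is well defined, and for $g\in\Gamma(W)={\rm Ker}(\rho_W)$ one has $\rho_{\neq 0}(\gamma)\in\langle J\rangle$ and hence $\widehat{\rho_{\neq 0}(\gamma)}\,\A^W=\A^W$. Combining the two, for every $g\in\Gamma(W)$ the function $\A^W$ transforms under $\tau\mapsto g\tau$ only by the factor just described: the twisted variables are not mixed, and every coefficient of $\A^W$ in its $\hbar$- and $\q$-expansion is a quasi-modular form on $\Gamma(W)$ of the predicted weight, which is Theorem~\ref{t1}.

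I expect the main obstacle to be the explicit determination of the monodromy $\rho$ --- equivalently, of $\Gamma(W)$ itself, including the verification of \eqref{kernels} --- since it is exactly here that $E_7^{(1,1)}$ and $E_8^{(1,1)}$ depart from the understood case $E_6^{(1,1)}$: it requires an explicit hold on the vanishing cohomology of these singularities and on the $\pi_1(\Si)$-action, and it is only at this stage that the groups $\Gamma(4)$ and $\Gamma(6)$ are identified. Within the deduction itself the one delicate point is the bookkeeping in the quantization of the $\lieh_0$-block: one must track the $z$-expansions of the two periods spanning $\mathcal{H}_0$ (especially the logarithmic term producing the off-diagonal part) and the genus-dependent central-charge scalar, so that the automorphy factor matches the classical quasi-modular one weight by weight. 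With \eqref{kernels} available this step is essentially parallel to the earlier treatment of $E_6^{(1,1)}$ and of the elliptic orbifold $\mathbb{P}^1$.
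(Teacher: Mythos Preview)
Your overall strategy---analytically continue the ancestor potential along $\pi_1(\Si)$, identify the resulting action as a quantized monodromy operator, and then observe that for $\gamma\in\Gamma(W)$ the twisted piece acts trivially while the invariant piece produces the quasi-modular automorphy factor---matches the paper's. But your execution diverges from the paper's in a way that introduces a real gap.

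The paper does \emph{not} work through the calibration $S_\si$ and a $\si$-independent descendant potential. It works on the $R$-matrix side: the ancestor is Givental's product $\widehat{\Psi_\s}\widehat{R_\s}e^{\widehat{U_\s/z}}\prod_i\D_{\rm pt}$, and the monodromy of the asymptotical operator $\Psi_\s R_\s e^{U_\s/z}$ is computed directly (Proposition~3.3), giving the transformation law $\A_\s\mapsto (\widehat{X}(\gamma,t)\A_\s)((ct_\m+d)^2\hbar;J(\gamma,t)\q)$ of Proposition~3.4. The operator $X(\gamma,t)=1-\tfrac{cz}{ct_\m+d}\,\phi_\m\bullet$ is the quasi-modular anomaly, and it is absorbed by introducing the anti-holomorphic completion $\widetilde{\A}_\si=\widehat{\widetilde X}\,\A_\si$, after which the transformation is the clean rescaling of Corollary~3.5. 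Theorem~\ref{t1} then follows by specializing to $\gamma\in\Gamma(W)$, where $J(\gamma,\si)$ becomes diagonal with entries that are powers of $(ct_\m+d)$ times roots of unity (the latter being invisible by homogeneity of $\A$).

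The gap in your plan is the tensor splitting $\mathfrak F_0\otimes\mathfrak F_{\neq 0}$ with $\widehat{\rho(\gamma)}=\widehat{\rho_0(\gamma)}\otimes\widehat{\rho_{\neq 0}(\gamma)}$. The Fock space is built on the Jacobi algebra $H$, not on the vanishing cohomology $\lieh$; the two are related only through the period map and the choice of flat frame $\{\alpha_\r\}$. More seriously, the paper's computation of the monodromy of the flat coordinates (Corollary~3.2) shows that the action is \emph{not} block-diagonal in any linear sense: the coordinate $t_0$ dual to the unit picks up the quadratic term $\tfrac{c}{2(ct_\m+d)}\sum_{\r',\r''\in\mathfrak R_{\rm tw}}(\partial_{\r'},\partial_{\r''})t_{\r'}t_{\r''}$ under continuation. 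So the monodromy of the full Frobenius structure mixes the invariant and twisted directions nonlinearly, and there is no tensor factorization of the quantized action on the Fock space. What saves the day in the paper is that one first writes down the full mixed transformation (Proposition~3.4), and only afterwards restricts to the marginal locus ${}'t=0$, where the off-block entries of the Jacobian $J(\gamma,t)$ vanish. Your plan, as written, assumes a splitting that does not exist at generic $\s$ and does not explain why passing to the limit ${}'t\to 0$ is legitimate---this limit is itself a nontrivial analyticity statement that the paper invokes from \cite{M,MR}. If you rewrite your argument on the $R$-matrix side, replace the tensor splitting by the explicit transformation of flat coordinates, and introduce the anti-holomorphic completion to handle the anomaly $X$, you recover the paper's proof.
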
  
The definition of the total ancestor potential in singularity theory
as well as the precise meaning of the quasi-modularity will be
recalled later on. Our second result can be stated this way.
\begin{theorem}\label{t2}
If $W$ is an elliptic Fermat polynomial of type
$E_6^{(1,1)}$, $E_7^{(1,1)}$, or $E_8^{(1,1)}$,
then $\Gamma(W)$ is respectively $\Gamma(3), \Gamma(4)$, or
$\Gamma(6)$. 
\end{theorem}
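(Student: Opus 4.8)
The plan is to prove Theorem~\ref{t2} by an explicit monodromy computation, carried out separately for each of the three Fermat polynomials. I first record the linear-algebra input. For every $\si\in\Si$ the polynomial $f(\si,\x)$ is quasi-homogeneous with the same weights as $W$ (the monomial $\phi_\m$ has weighted degree one), so the monodromy $J$ is semisimple; computing its eigenvalues from the spectrum of $W$ shows that $\dim_\C\lieh_{\neq 0}=6,7,8$ and that the order of $J|_{\lieh_{\neq 0}}$ is exactly $N=3,4,6$ for $W$ of type $E_6^{(1,1)},E_7^{(1,1)},E_8^{(1,1)}$ respectively. The intersection form identifies $\lieh_{\neq 0}$ with the complexified root lattice of type $E_N$, and since $J$ commutes with $\pi_1(\Si)$ it makes $\lieh_{\neq 0}$ a module over $\Z[e^{2\pi i/N}]$ on which $\pi_1(\Si)$ acts through $\Z[e^{2\pi i/N}]$-linear isometries. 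On the other side $\lieh_0$ carries a $\rho_0$-invariant rank-two lattice with a unimodular skew form, so $\rho_0$ takes values in ${\rm SL}_2(\Z)$; via the period map $\si\mapsto\tau(\si)$ this realizes $\widetilde{\Gamma}(W)={\rm Im}(\rho_0)$ as a finite-index subgroup of ${\rm SL}_2(\Z)$.

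Next I would make the family $f(\si,\x)=\la$ completely explicit. After the evident weighted-homogeneous coordinate change it becomes a classical pencil of genus-one curves: the Hesse pencil $x_1^3+x_2^3+x_3^3+\si\,x_1x_2x_3$ for $E_6^{(1,1)}$; a pencil of double covers of $\mathbb{P}^1$ branched at the four roots of the binary quartic $x_1^4+\si\,x_1^2x_2^2+x_2^4$ for $E_7^{(1,1)}$; and a Weierstrass-type pencil $x_3^2=x_2^3+\si\,x_2+1$ (after rescaling) for $E_8^{(1,1)}$. In each case $\Si$ is $\mathbb{P}^1$ with the finitely many values of $\si$ deleted at which $f$ loses its isolated critical point, together with $\si=\infty$; hence $\pi_1(\Si)$ is free on small loops $\gga_1,\dots,\gga_k$ around those punctures. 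Using the Picard--Lefschetz formula, equivalently the Kodaira type of each degenerate fibre, I would compute for every $\gga_i$ the pair
\[
\big(\rho_0(\gga_i),\ \rho_{\neq 0}(\gga_i)\langle J\rangle\big)\in {\rm SL}_2(\Z)\times\big({\rm GL}(\lieh_{\neq 0})/\langle J\rangle\big).
\]
The first components determine $\widetilde{\Gamma}(W)$, and one checks directly that $\widetilde{\Gamma}(W)\supseteq\Gamma(N)$; the second components record the $E_N$-lattice monodromy modulo the $\langle J\rangle$-ambiguity built into the definition \eqref{rho-W}.

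The theorem then follows by combining three verifications. (i) The class $\rho_{\neq 0}(\gga)\langle J\rangle$ depends only on the reduction of $\rho_0(\gga)$ modulo $N$, i.e.\ the $E_N$-lattice monodromy read modulo $\langle J\rangle$ is ``defined over $\Z/N$''. (ii) This class is trivial, equivalently $\rho_{\neq 0}(\gga)\in\langle J\rangle$, precisely when $\rho_0(\gga)\equiv I\pmod N$; this is a faithfulness statement for which the $\Z[e^{2\pi i/N}]$-module structure of $\lieh_{\neq 0}$ and the description of its isometry group are the key inputs. (iii) $\widetilde{\Gamma}(W)\supseteq\Gamma(N)$, from the previous paragraph. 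Granting (i) and (ii), $\Gamma(W)={\rm Ker}(\rho_W)=\{\rho_0(\gga):\rho_0(\gga)\equiv I\pmod N\}=\widetilde{\Gamma}(W)\cap\Gamma(N)$, and with (iii) this equals $\Gamma(N)$, namely $\Gamma(3),\Gamma(4),\Gamma(6)$ in the three cases.

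The main obstacle is verification (ii): proving that the monodromy on the non-invariant part $\lieh_{\neq 0}$ is controlled \emph{exactly} by $\rho_0$ modulo $N$, with no loss of information beyond the unavoidable $\langle J\rangle$- and $\pm I$-ambiguities. This forces one to keep precise track of the positions of all vanishing cycles of $W$ inside the $E_N$-lattice, and it is genuinely delicate for $E_8^{(1,1)}$: there $N=6$ is composite, so ${\rm SL}_2(\Z/6)\cong{\rm SL}_2(\Z/2)\times{\rm SL}_2(\Z/3)$ must be analyzed prime by prime, and the rank of $\lieh_{\neq 0}$ is largest. A subsidiary point is the exact identification of $\widetilde{\Gamma}(W)$ as a congruence subgroup of ${\rm SL}_2(\Z)$ from the explicit generators $\gga_i$; this, together with the inclusion \eqref{kernels} already established, is precisely what makes $\rho_W$, and hence $\Gamma(W)$, well defined.
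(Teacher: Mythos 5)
Your proposal sets up the right objects but does not contain a proof: steps (i) and (ii) of your plan are, taken together with (iii), logically equivalent to the statement of Theorem~\ref{t2}, and you leave them unproven. Claim (ii) --- that $\rho_{\neq 0}(\gamma)\in\langle J\rangle$ exactly when $\rho_0(\gamma)\equiv I \pmod N$ --- is the whole content of the theorem; the appeal to ``the $\Z[e^{2\pi i/N}]$-module structure of $\lieh_{\neq 0}$ and the description of its isometry group'' is not an argument, and you yourself flag this step as the main obstacle without indicating how it would be overcome. Claim (i) is likewise asserted with no mechanism, and in fact it is not even clear a priori that $\overline{\rho}_{\neq 0}$ factors through $\rho_0$ at all: that is condition \eqref{kernels}, which you describe as ``already established'' but which is only proved in the course of the argument (for $E_7^{(1,1)}$ via ${\rm Ker}\,\rho_0=0$ in Lemma~\ref{mod-group:x9}, and for $E_8^{(1,1)}$ by checking that the twisted-sector monodromy matrices satisfy the relations \eqref{ker-rho0} in Lemma~\ref{gamma6}). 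Even your step (iii), and more generally the exact identification of $\widetilde{\Gamma}(W)$ inside ${\rm SL}_2(\Z)$, requires a computation you do not carry out: the delicate point is the normalization of the symplectic basis $\{A,B\}$, i.e.\ the constants $a,b$ (and the integer $m$) of Lemma~\ref{sol:GW}, without which the generators of $\widetilde{\Gamma}(W)$ are only known up to conjugation, which is not enough to compare with a specific congruence subgroup.

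By contrast, the paper's proof consists precisely of the computations your plan postpones: all Picard--Fuchs systems (for $\lieh_0$ and for each twisted sector) are reduced to explicit hypergeometric equations; the monodromy matrices $M_{i,0}$ and $M_{i,d}$ are computed from the local monodromies and connection matrices of Section~\ref{sec:local}; the twisted-sector monodromy is identified as a concrete finite group ($J$-powers only for $E_6^{(1,1)}$, giving $\Gamma(W)=\widetilde{\Gamma}(W)=\Gamma(3)$ at once; $\Z/2\times\Z/2$ for $E_7^{(1,1)}$; groups of order $24$, $6$, $24$ on the sectors $d=1/3,1/2,2/3$ for $E_8^{(1,1)}$); explicit generators of $\Gamma(4)$, resp.\ $\Gamma(3)$ and $\Gamma(2)$, are checked to lie in ${\rm Ker}\,\rho_W$; and the kernel is then pinned down exactly by index counts in ${\rm SL}_2(\Z)$ (e.g.\ $4\times 2\times 6=48=[{\rm SL}_2(\Z):\Gamma(4)]$ in the $E_7^{(1,1)}$ case, and $\Gamma(3)\cap\Gamma(2)=\Gamma(6)$ in the $E_8^{(1,1)}$ case). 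Your Picard--Lefschetz/Kodaira-fibre strategy could in principle produce the same data, but as written the proposal replaces this entire computation by the unproved assertions (i)--(ii), so it has a genuine gap rather than being an alternative proof.
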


\subsection{Applications to Gromov--Witten theory}

The LG/CY correspondence was proposed by Ruan
\cite{Ru1}. In our settings it can be stated this way. A triplet of
non-zero complex numbers $(\la_1, \la_2, \la_3)\in (\C^*)^3$
is called a {\em diagonal symmetry} of $W$ if 
\ben
W(\la_1 x_1, \la_2 x_2, \la_3 x_3)=W(x_1, x_2, x_3).
\een
The diagonal symmetries form a group $G_W$. It contains the element
\ben
J_W={\rm diag}(e^{2\pi \sqrt{-1} q_1},e^{2\pi \sqrt{-1} q_2} , e^{2\pi
  \sqrt{-1} q_3}),\quad q_i=1/a_i.
\een
The equation $W=0$ defines an elliptic curve $X_W$ in the
weighted projective plane $\mathbb{P}^{2}(c_1, c_2, c_3)$, where 
$q_i=c_i/d$ for a common denominator $d.$  The action of the group
$\widetilde{G}_W:=G_W/\langle J_W\rangle$ on $X_W$ is
faithful and the quotient $\mathcal{X}_W:=X_W/\widetilde{G}_W$ is an
orbifold projective line $\mathbb{P}^1_{a_1,a_2,a_3}$. The
LG/CY correspondence predicts that the  GW invariants of
$\mathcal{X}_{W}$ can be obtained from the so-called Fan--Jarvis--Ruan--Witten
(FJRW) invariants (see \cite{FJR,FJR2}) of the pair $(W, G_W)$ via 
analytic continuation and a certain quantizatied symplectic
transformation (c.f. \cite{Ru1}). Chiodo--Ruan addressed the idea of using global mirror symmetry to solve the LG/CY correspondence \cite{CR, CR1}. This approach has been very successful so far, see \cite{CR,KS,MR} for more details.   

The orbifold GW invariants of $\mathcal{X}=\mathbb{P}^1_{a_1,a_2,a_3}$ are defined as follows. Let 
$\overline{\mathcal{M}}_{g,n,\beta}^{\mathcal{X}}$ be the moduli space
of degree-$\beta$ stable maps from a genus-$g$ orbi-curve, equipped with
$n$ marked points, to $\mathcal{X}$. Here $\beta\in {\rm Eff}(\X)$ where ${\rm Eff}(\X)\subset H_2(\mathcal{X};\Z)$ is the cone of effective curve
classes. By definition the Novikov ring is the completed group algebra
of  ${\rm Eff}(\X)$. In our case, since $H_2(\X;\Z)= \Z\cdot[\X],$
where $[\X]$ is the fundamental class of $\X$, we may
identify the Novikov ring with the space of formal power series
$\C[\![q]\!]$ and replace $\overline{\mathcal{M}}_{g,n,d\cdot[\X]}^{\mathcal{X}}$ by $\overline{\mathcal{M}}_{g,n,d}^{\mathcal{X}}$. Let us denote by 
$\pi$ the forgetful map, and by ${\rm
  ev}_i$ the evaluation at the $i$-th marked point
\ben
\begin{array}[c]{ccccc}
\overline{\mathcal{M}}_{g,n}&
\stackrel{\pi}{\longleftarrow}&
\overline{\mathcal{M}}_{g,n,d}^{\mathcal{X}}&
\stackrel{\rm ev_i}{\longrightarrow}&
I\mathcal{X}\ ,
\end{array}
\een
where $I\mathcal{X}$ is the inertia orbifold of $\mathcal{X}$. The moduli space is equipped with a virtual fundamental
cycle $[\overline{\mathcal{M}}_{g,n,d}^{\mathcal{X}}]$, s.t., the maps
\ben
\Lambda_{g,n}^\mathcal{X}:
H^*_{CR}(\X;\C[\![q]\!])^{\otimes n} \longrightarrow
H^*(\overline{\M}_{g,n};\C)
\een
defined by
\ben
\Lambda_{g,n}^\mathcal{X}=\sum_{d=0}^\infty
q^d\,\Lambda_{g,n,d}^\mathcal{X},\quad \Lambda_{g,n,d}^\mathcal{X}(\alpha_1,\dots,\alpha_n):=
\pi_*\, \Big(
[\overline{\mathcal{M}}_{g,n,d}^{\mathcal{X}}]  \cap 
\prod_{i=1}^n {\rm ev}_i^*(\alpha_i)\ 
\Big)
\een
form a CohFT with state space the Chen-Ruan cohomology 
$H^*_{CR}(\X;\C[\![q]\!])$. The {\em ancestor} GW
invariants of $\mathcal{X}$ are by definition the
following formal series:
\beq\label{cor-anc}
\LD
\tau_{k_1}(\alpha_1),\dots,\tau_{k_n}(\alpha_n)\RD_{g,n}=
\sum_{d=0}^\infty\, q^d \,
\int_{\overline{\M}_{g,n}}
\Lambda^\X_{g,n,d}(\alpha_1,\dots
,\alpha_n)\,\psi_1^{k_1}\cdots\psi_n^{k_n}\, ,
\eeq
where $\psi_i$ is the $i$-th psi class on ${\overline{\M}_{g,n}}$, $\alpha_i\in H^*_{CR}(\X;\C),$ and $k_i\in \Z_{\geq 0}$. 
For more details on orbifold Gromov--Witten theory we refer to
\cite{CheR}.
The total ancestor potential of $\X$ is by definition the following
generating series
\ben
\A^\X_q(\hbar;\q) = \exp \Big(\sum_{g,n=0}^\infty \hbar^{g-1} 
\LD \q(\psi_1)+\psi_1,\dots,\q(\psi_n)+\psi_n\RD_{g,n}\Big),
\een
where $\q(z)=\sum_{k=0}^\infty \q_k z^k$ and $\{\q_k\}_{k=0}^\infty$ is
a sequence of formal vector variables with values in $H^*_{\rm CR}(\X;\C)$. The
generating function is a formal series in
$\q_0,\q_1+1,\q_2,\dots$. 

Following the ideas of Krawitz--Shen \cite{KS} and
Milanov--Ruan \cite{MR}, one can obtain a very precise correspondence
between the total ancestor potentials $\A_\si^W$ and $\A^\X_q$ (see
\cite{MS}).  Let us briefly explain this correspondence. Recall that
the curve 
\ben
E_\si=\{f(\si,\x)=0\} \subset \mathbb{P}^2(c_1,c_2,c_3)
\een
is called {\em the elliptic curve at infinity}. Let us think
of $\Si$ as a punctured $\mathbb{P}^1$ and let us select a puncture $p$, s.t., the
$j$-invariant $j(E_\si)\to \infty$ as $\si\to p.$ For example, if $W$
is the Fermat polynomial of type $E_7^{(1,1)}$, then $p=-2,2,$ or
$\infty$. If $W$ is the Fermat polynomial of type $E_8^{(1,1)}$, then
$p$ is a solution to $4p^3+27=0$. The main statement is that there
exists a function $\pi_B(\si)/\pi_{A}(\si)$ (see \cite{MS}) on $\mathbb{P}^1$ holomorphic near $\si=p$,
s.t., under a mirror map $q=\pi_B(\si)/\pi_{A}(\si)$, the total ancestor potential
$\A^\X_q$ coincides with $\A^W_\si$. Let us point out that the
definition of $\A^W_\si$ requires a choice of a {\em primitive form}
in the sense of K. Saito \cite{S1}. Part of the statement is that
there exists a primitive form, s.t., the identification holds. 
Combining the mirror symmetry theorem of \cite{MS} with Theorem \ref{t1} and
Theorem \ref{t2} we get
\begin{corollary} If $\X$ is one of the orbifolds
  $\mathbb{P}^1_{3,3,3}, \mathbb{P}^1_{4,4,2},$ or
$\mathbb{P}^1_{6,3,2}$, then 
the Gromov--Witten invariants \eqref{cor-anc} are quasi-modular forms
respectively on $\Gamma(3), \Gamma(4), $ or $\Gamma(6).$ 
\end{corollary}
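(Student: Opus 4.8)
The plan is to transport the quasi-modularity of Theorem~\ref{t1}, in the sharp form furnished by Theorem~\ref{t2}, from the B-model (singularity theory) to the A-model (Gromov--Witten theory) via the mirror map. First I would invoke the mirror symmetry theorem of \cite{MS}: for each elliptic Fermat polynomial $W$ of type $E_6^{(1,1)}$, $E_7^{(1,1)}$, or $E_8^{(1,1)}$ there is a choice of primitive form such that, under the mirror map $q=\pi_B(\si)/\pi_A(\si)$, the total ancestor potential $\A^\X_q$ of the corresponding orbifold $\X=\mathbb{P}^1_{a_1,a_2,a_3}$ is identified with the total ancestor potential $\A^W_\si$ of $W$. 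Since the ancestor Gromov--Witten invariants \eqref{cor-anc} are precisely the Taylor coefficients of $\log\A^\X_q$ in the formal vector variables $\q_0,\q_1+1,\q_2,\dots$, this identification turns each of them, after the substitution $q=\pi_B(\si)/\pi_A(\si)$, into the corresponding correlator extracted from $\A^W_\si$.

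Next I would combine this with Theorems~\ref{t1} and~\ref{t2}. By Theorem~\ref{t1} the correlators of $\A^W_\si$ transform as quasi-modular forms on $\Gamma(W)$, where the modular parameter $\tau$ is attached to the elliptic curve at infinity $E_\si$ through the Gauss--Manin connection and $\Gamma(W)={\rm Ker}(\rho_W)\subset\widetilde{\Gamma}(W)={\rm Im}(\rho_0)$ acts on the $\tau$-upper half-plane $\HH$ as a congruence subgroup of ${\rm SL}_2(\Z)$. Because the monodromy of a loop in $\pi_1(\Si)$ acts on $\si$, and hence on $\pi_B(\si)/\pi_A(\si)$, through $\rho_0$, the mirror map read in the uniformizer $\tau$ is a $\widetilde{\Gamma}(W)$-invariant (in particular $\Gamma(W)$-modular) function $q=q(\tau)$. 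Pulling the A-side correlators back along $q=q(\tau)$ therefore produces quasi-modular forms on $\Gamma(W)$ in the variable $\tau$; and by Theorem~\ref{t2} the group $\Gamma(W)$ is $\Gamma(3)$, $\Gamma(4)$, or $\Gamma(6)$ according to the type of $W$, that is, according to whether $\X$ is $\mathbb{P}^1_{3,3,3}$, $\mathbb{P}^1_{4,4,2}$, or $\mathbb{P}^1_{6,3,2}$. This is exactly the assertion.

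The one point that requires care is the compatibility between the B-side notion of quasi-modularity --- phrased through the Gauss--Manin monodromy group $\widetilde{\Gamma}(W)$ --- and the classical notion on $\HH/\Gamma(N)$, together with the fact that $q=\pi_B(\si)/\pi_A(\si)$, expressed in $\tau$, is a genuine modular function rather than merely a formal power series, so that the $q$-series of Gromov--Witten invariants indeed converge to modular objects. These facts are, however, already built into the mirror theorem of \cite{MS} and into the framework underlying Theorem~\ref{t1}, so beyond invoking them the corollary follows by a direct assembly of the two results rather than through a substantial new obstacle.
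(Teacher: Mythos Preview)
Your proposal is correct and matches the paper's own approach: the corollary is presented there simply as the result of ``combining the mirror symmetry theorem of \cite{MS} with Theorem \ref{t1} and Theorem \ref{t2},'' with no additional argument given. Your write-up is a faithful (and somewhat more detailed) unpacking of exactly that combination.
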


\subsection{Acknowledgement}
We thank Yongbin Ruan for his insight and support for this project. 
Both authors would like to thank for many stimulating conversations. The first author benefited from conversations with Satoshi Kondo and Charles Siegel. We thank Arthur
Greenspoon and Noriko Yui for editorial assistance. The work of both authors is
supported by Grant-In-Aid and by the World Premier International
Research Center Initiative (WPI Initiative), MEXT, Japan.

\section{The total ancestor potential in singularity theory}

Let $\S=\Si\times \C^{\mu-1}$. We fix a coordinate system on $\S$,
such that the coordinates of $\s=(s_\m,s_{\r_1},\cdots, s_{\r_{\mu-1}})\in \S$ are indexed by the exponents
$\mathfrak{R}$ (cf. \eqref{exp}) in such a way that $s_\m\in \Si$ and $s_\r\in\C$ for $\r\neq\m$. 
The miniversal deformation of $W$ can be given by the following
function
\beq\label{min-def}
F(\s,\x)=W(\x)+\sum_{\r\in \mathfrak{R}} s_\r\, \phi_\r(\x),
\eeq
where the domain of $F(\s,\x)$ is $X:=\S\times \C^3.$ The marginal
deformations $f(\si,\x)$ are obtained from $F(\s,\x)$ by restricting
$s_\m=\si$ and $s_\r=0$ for $\r\neq\m$. 

It is well known (see \cite{He, SaT}) that Saito's theory of primitive forms (cf. \cite{S1}) gives
rise to a Frobenius manifold structure (cf. \cite{Du}) on $\S.$ In this section
the goal is to recall the key points in the construction of this
Frobenius manifold structure and then use the higher-genus reconstruction
formalism of Givental to define the total ancestor potential of $W$.

\subsection{Saito's theory}
Let $C\subset X$ be the critical variety of $F(\s,\x)$, i.e., the support of the sheaf is
\ben
\O_C:=\O_X/\langle  F_{x_1}, F_{x_2}, F_{x_3}\rangle.
\een
Let $q:X\to \S$ be the projection on the
first factor. The Kodair--Spencer map ($\T_\S$ is the sheaf of
holomorphic vector fields on $\S$) 
\ben
\T_\S\longrightarrow q_*\O_C,\quad \d/\d s_\r\mapsto \d F/\d s_\r\ {\rm
  mod} \ (F_{x_1},F_{x_2},F_{x_3})
\een 
is an isomorphism, which implies that for any $\s\in\S,$ the tangent space $T_\s\S$
is equipped with an associative commutative multiplication
$\bullet_\s$ depending holomorphically on $\s\in \S$. If in
addition we have a volume form $\omega=g(\s,\x)d^3{  \x},$
where $d^3{  \x}=dx_1\wedge dx_2\wedge dx_3$ is the standard volume form; then $q_*\O_C$ (hence $\T_\S$ as well) is equipped with the {\em residue pairing}:
\beq\label{res:pairing}
(\psi_1,\psi_2)= \frac{1}{(2\pi i)^3} \ \int_{\Gamma_\ge} \frac{\psi_1({  \s,\y})\psi_2({  \s,\y})}{F_{y_1}F_{y_2} F_{y_3}}\, \omega,
\eeq
where ${  \y}=(y_1,y_2,y_3)$ is an unimodular coordinate system for the volume form, i.e., $\omega=d^3{  \y}$, and $\Gamma_\ge$ is a real $3$-dimensional cycle supported on $|F_{x_i}|=\ge$ for $1\leq i\leq 3.$

Given a semi-infinite cycle
\beq\label{cycle}
\mathcal{A}\in  \lim_{ \longleftarrow } H_3(\C^3, (\C^3)_{-m} ;\C)\cong \C^\mu,
\eeq
where
\beq\label{level:lower}
(\C^3)_m=\{{  \x}\in \C^3\ |\ {\rm Re}(F({  \s,\x})/z)\leq m\}.
\eeq
Put
\beq\label{osc_integral}
J_{\mathcal{A}}(\s,z) = (-2\pi z)^{-3/2} \, zd_\S \, \int_{\mathcal{A}} e^{F({  \s,\x})/z}\omega,
\eeq
where $d_\S$ is the de Rham differential on $\S$. The oscillatory integrals $J_{\mathcal{A}}$ are by definition sections of the cotangent sheaf $\T_\S^*$.

According to Saito's theory of primitive forms \cite{S1,MoS}, there exists
a volume form $\omega$ such that the residue pairing is flat and the
oscillatory integrals satisfy a system of differential equations,
which in flat-homogeneous coordinates $\t=(t_\r)_{\r\in\mathfrak{R}}$ have the form 
\beq\label{frob_eq1}
z\d_\r J_{\mathcal{A}}({  \t},z) = \d_\r \bullet_{  \t} J_{\mathcal{A}}({  \t},z),
\eeq
where $ \d_\r:=\d/\d t_\r$ and the multiplication is
defined by identifying vectors and covectors via the residue pairing. 
Using the residue pairing, the flat structure, and the
Kodaira--Spencer isomorphism we have the following isomorphisms:
\ben
T^*\S\cong T\S\cong \S\times T_{\bf 0}\S\cong \S\times H.
\een
Due to homogeneity the integrals satisfy a differential equation with respect to the parameter $z\in \C^*$
\beq
\label{frob_eq2}
(z\d_z + E)J_{\mathcal{A}}({  \t},z) =  \Theta\, J_{\mathcal{A}}({ \t},z),
\eeq
where
\ben
E=\sum_{\r\in\mathfrak{R}}d_\r t_\r \d_\r,\quad (d_\r:={\rm deg}\, t_\r={\rm deg}\, s_\r),
\een
is the {\em Euler vector field} and $\Theta$ is the so-called {\em Hodge grading operator }
\ben
\Theta:\T^*_S\rightarrow \T^*_S,\quad \Theta(dt_\r)=\left(\frac{1}{2}-d_\r\right)dt_\r.
\een
The compatibility of the system \eqref{frob_eq1}--\eqref{frob_eq2} implies that the residue pairing, the multiplication, and the Euler vector field give rise to a {\em conformal Frobenius structure} of   conformal  dimension $1$. We refer to B. Dubrovin \cite{Du} for the definition and more details on Frobenius structures.

\subsection{Primitive forms for simple elliptic singularities}\label{sec:pf}

The classification of primitive forms in general is a very difficult
problem. In the case of simple elliptic singularities however, all
primitive forms are known (see \cite{S1}). They are given by
$\omega=d^3\x/\pi_{A}(\sigma)$, where $\pi_A(\si)$ is a period
of the elliptic curve at infinity. Let us recall also that
$\pi_A(\si)$ can be expressed in terms of a period of the
Gelfand-Lerey form $d^3\x/df$ as follows. We embed $\C^3$ in the
weighted projective space $\mathbb{P}^3(1,c_1,c_2,c_3)$ via 
$x_i=X_i/X_0^{c_i}$, $i=1,2,3.$ The Zariski closure of the 
Milnor fiber $X_{\si,1}$ is $\overline{X}_{\si,1}=X_{\si,1}\cup
E_\si$, therefore we have a {\em tube} map
\ben
L:H_1(E_\si;\Z)\to H_2(X_{\si,1};\Z)
\een
which allows us to write
\ben
\pi_A(\si):= \int_{L(A)}\, \frac{d^3x}{df} = 2\pi\sqrt{-1} \int_A \,
{\rm Res}_{E_\si} \frac{d^3x}{df}.
\een
Let us point out that when $\si=0$ the image of the tube map $L$ is
precisely $\lieh_0$ and the monodromy representation $\rho_0$
coincides with the monodromy representation of the elliptic pencil
$E_\si$, $\si\in \Si$.

\subsection{Givental's higher-genus reconstruction formalism}

Following Givental we introduce the vector space
$\H=H((z))$ of formal Laurent series in $z^{-1}$ with
coefficients in $H$, equipped with the symplectic structure
\ben
\Omega(f(z),g(z))= {\rm res}_{z=0} (f(-z),g(z))dz.
\een
Using the polarization $\H=\H_+\oplus \H_-$, where $\H_+=H[z]$ and
$\H_-=H[[z^{-1}]]z^{-1}$ we identify $\H$ with the cotangent bundle
$T^*\H_+$. The goal in this subsection is to define the total ancestor
potential of $W$. 

\subsubsection{The stationary phase asymptotics}\label{asymptotic:sec}
We fix a primitive form $\omega=d^3\x/\pi_A(\si)$ and let $\{t_\r\}$ be flat
coordinates, defined near $\s=0$, s.t., under  the Kodaira--Spencer
isomorphism $T_0\S\cong H$ we have $\d_\r=\phi_\r(\x).$ Since,
$\pi_A$ is a multi-valued analytic function on $\S$, the flat coordinates $t_e$
are also multi-valued analytic functions on $\S$. 

Let $\s\in \S$ be a semi-simple point, i.e., the critical values $\{u_i(\s)\}_{i=1}^{\mu}$
of $F(\s,\x)$ form locally near $\s$ a coordinate system. Let us
also fix a path from $0\in S$ to $\s$, so that we have a fixed branch
of the flat coordinates. Then we have an isomorphism 
\ben
\Psi_\s:\C^\mu\to H,\quad e_i\mapsto \sqrt{\Delta_i}\,\d_{u_i} =
\sqrt{\Delta_i}\, \sum_{\r\in \mathfrak{R}} \frac{\d u_i}{\d t_\r}\, \phi_\r,
\een
where $\Delta_i$ is determined by $(\d/\d u_i,\d/\d
u_j)=\delta_{ij}/\Delta_i$. It is well known that $\Psi_s$
diagonalizes the Frobenius multiplication and the residue pairing,
i.e., 
\ben
e_i\bullet e_j = \sqrt{\Delta_i}e_i\delta_{i,j},\quad (e_i,e_j)=\delta_{ij}.
\een
Let $\S_{\rm ss}$ be the set of all semi-simple points. The complement
$\mathcal{K}=\S\setminus{\S_{\rm ss}}$ is an analytic hypersurface
also known as the {\em caustic}. It corresponds to deformations, s.t.,
$F(\s,\x)$ has at least one non-Morse critical point. By definition, we get a multi-valued analytic map
\ben
\S_{\rm ss}\to {\operatorname{Hom}}_\C(\C^\mu,H),\quad \s\mapsto \Psi_\s.
\een

The system of differential equations (\ref{frob_eq1}) and
(\ref{frob_eq2}) admits a unique formal asymptotical solution of the type 
\ben
\Psi_\s\, R_\s(z)\, e^{U_\s/z},\quad R_\s(z)=1+z\,R_{\s,1}+z^2\,R_{\s,2}+\cdots
\een
where $U_\s$ is a diagonal matrix with entries $u_1(\s),\dots,u_\mu(\s)$ on the
diagonal and $R_{\s,k} \in {\operatorname{Hom}}_\C(\C^\mu,\C^\mu)$.
Alternatively this formal solution coincides with the stationary phase asymptotics of the
following integrals. Let $\B_i$ be the semi-infinite cycle of the
type \eqref{cycle} consisting of all points $\x\in \C^3$ such that the gradient trajectories of $-{\rm Re}(F(\s,\x)/z)$ flow into the critical value $u_i(\s)$. Then
\ben
(-2\pi z)^{-3/2}\ z\,d_S\ \int_{\B_i} e^{F(\s,\x)/z}\omega \ \sim\ e^{u_i(\s)/z}\,\Psi_\s\, R_\s(z) e_i\quad \mbox{ as }\ z\to 0.
\een
We refer to \cite{G1} and \cite{G2} for more details and proofs.

\subsubsection{The quantization formalism}

Let us fix a Darboux coordinate system on $\H$ given by the linear
functions $q_k^\r$, $p_{k,\r}$ defined as follows: 
\ben
\f(z) = \sum_{k=0}^\infty \sum_{\r\in\mathfrak{R}} \ (q_k^{\r}\, \phi_\r\, z^k + p_{k,\r}\,\phi^\r\,(-z)^{-k-1})\quad \in \quad \H,
\een
where $\{\phi^\r\}_{\r\in\mathfrak{R}}$ is a basis of $H$ dual to
$\{\phi_\r\}_{\r\in\mathfrak{R}}$ with respect to the residue pairing.

If ${R}=e^{A(z)}$, where $A(z)$ is an infinitesimal symplectic
transformation, then we define $\widehat{R}$ as follows. Since $A(z)$
is infinitesimal symplectic, the map $\f\in \H \mapsto A\f\in \H$
defines a Hamiltonian vector field with Hamiltonian given by the
quadratic function $h_A(\f) = \frac{1}{2}\Omega(A\f,\f)$. By
definition, the quantization of $e^A$ is given by the differential
operator $e^{\widehat{h}_A},$ where the quadratic Hamiltonians are
quantized according to the following rules: 
\ben
(p_{k',\r'}p_{k'',\r''})\sphat = \hbar\frac{\d^2}{\d q_{k'}^{\r'}\d q_{k''}^{\r''}},\quad
(p_{k',\r'}q_{k''}^{\r''})\sphat = (q_{k''}^{\r''}p_{k',\r'})\sphat = q_{k''}^{\r''}\frac{\d}{\d q_{k'}^{\r'}},\quad
(q_{k'}^{\r'}q_{k''}^{\r''})\sphat =q_{k'}^{\r'}q_{k''}^{\r''}/\hbar.
\een
Note that the quantization defines a projective representation of the Poisson Lie algebra of quadratic Hamiltonians:
\ben
[\widehat{\mathfrak{A}},\widehat{\mathfrak{B}}]=\{\mathfrak{A},\mathfrak{B}\}\sphat + C(\mathfrak{A},\mathfrak{B}),
\een
where $\mathfrak{A}$ and $\mathfrak{B}$ are quadratic Hamiltonians and the values of the cocycle $C$ on a pair of Darboux monomials is non-zero only in the following cases:
\beq\label{cocycle}
C\left(p_{k',\r'}p_{k'',\r''},q_{k'}^{\r'}q_{k''}^{\r''}\right)=
\begin{cases}
1 & \mbox{ if } (k',\r')\neq (k'',\r''),\\
2 & \mbox{ if } (k',\r')=(k'',\r'').
\end{cases}
\eeq

\subsubsection{The total ancestor potential}
By definition, the Kontsevich-Witten tau-function is the following generating series:
\beq\label{D:pt}
\D_{\rm pt}(\hbar;q(z))=\exp\Big( \sum_{g,n}\frac{1}{n!}\hbar^{g-1}\int_{\overline{\M}_{g,n}}\prod_{i=1}^n (q(\psi_i)+\psi_i)\Big),
\eeq
where $q(z)=\sum_k q_k z^k,$ $(q_0,q_1,\ldots)$ are formal variables,
$\psi_i$ ($1\leq i\leq n$) are the first Chern classes of the
cotangent line bundles on $\overline{\M}_{g,n}.$ The function is
interpreted as a formal series in $q_0,q_1+{\bf 1},q_2,\ldots$ whose coefficients are Laurent series in $\hbar$.

Let $\s\in \mathcal{S}_{\rm ss}$ be {\em a semi-simple}
point. Motivated by Gromov--Witten theory Givental introduced the
notion of the {\em total ancestor potential} of a semi-simple
Frobenius structure (see \cite{G1,G2}). In our settings the definition takes the form
\beq\label{ancestor}
\A_\s(\hbar;\q) :=
\widehat{\Psi_\s}\,\widehat{R_\s}\,e^{\widehat{U_\s/z}}\,
\prod_{i=1}^\mu \D_{\rm pt}\left(\hbar\,\Delta_i(\s); \leftexp{\it i}{\bf q} (z) \sqrt{\Delta_i(\s)}\right)
\eeq
where
\ben
\q(z)=\sum_{k=0}^\infty
\sum_{\r\in\mathfrak{R}} q_k^\r\, z^k\phi_\r, \quad 
\leftexp{\it i}{\bf q} (z) = \sum_{k=0}^\infty \leftexp{\it i}{q}_k \,
z^k.
\een
The quantization $\widehat{\Psi_\s}$ is interpreted as the change of variables
\beq\label{change}
\sum_{i=1}^\mu \leftexp{\it i}{\bf q}(z)\,e_i=\Psi_\s^{-1}\q(z)\quad \mbox{i.e.}\quad
\leftexp{\it i}{q}_k\sqrt{\Delta_i} =\sum_{\r\in\mathfrak{R}} \frac{\d u^i}{\d t_\r}\, q_k^\r.
\eeq

\section{Modularity and monodromy}

The flat coordinates are multi-valued analytic functions on $\S$. 
In this section we will compute their monodromy under analytic
continuation. Once this task is completed the proof of Theorem
\ref{t1} will be easy. 

\subsection{Picard--Fuchs equations}\label{sec:PF}
We consider the so-called {\em geometric sections} (see \cite{AGV})
\beq\label{g-sec}
\Phi_\r(\si,\la):= \int\x^\r\,\frac{d^3\x}{df}\quad \in H^2(X_{\si,\la};\C),
\eeq
where $\r=(r_1,r_2,r_3), r_i\in \Z_{\geq 0},$ and
$\x^\r:=\phi_\r=x_1^{r_1}x_2^{r_2}x_3^{r_3}.$ The geometric sections \eqref{g-sec} with all $\r\in\mathfrak{R}$
give rise to a trivialization of the vanishing cohomology
bundle. The Gauss--Manin connection corresponds to a
system of Fuchsian differential equations known as {\em Picard--Fuchs}
equations. It is enough to solve this system at $\la=1$, because the
homogeneity of $f(\si,x)$ yields the following simple relation
\ben
\Phi_\r(\si,\la) = \la^{{\rm deg}(\r)}\, \Phi_\r(\si,1),
\een
where ${\rm deg}(\r):={\rm deg}(\x^\r)=\sum_i r_i\, q_i.$ The Picard--Fuchs
equations have the form
\beq\label{PF}
\d_\si\, \Phi_\r(\si,1) = \sum_{\r'\in\mathfrak{R}} G_{\r,\r'}(\si)\, \Phi_{\r'}(\si,1),
\eeq
where $\mathfrak{G}=\left(G_{\r,\r'}(\si)\right)$ is a square matrix of size $\mu=|\mathfrak{R}|$, whose
entries are holomorphic functions on $\Si$. Let us denote by
$\mathfrak{F}=\left(\mathfrak{F}_{\r',\r''}(\si)\right)$ a fundamental solution to the above system,
i.e., $\mathfrak{F}$ is a non-degenerate matrix satisfying $\d_\si \mathfrak{F} = \mathfrak{G}\cdot \mathfrak{F}$. 
We have
\beq\label{F-matrix}
\Phi_\r(\si,\la) = \la^{{\rm deg}(\r)}\, 
\sum_{\r'\in \mathfrak{R}}\, 
\mathfrak{F}_{\r,\r'}(\si) \,A_{\r'},
\eeq
where $A_{\r'}, \r'\in \mathfrak{R}$ are multi-valued {\em flat} sections. 
The system \eqref{PF} is block-diagonal in the following sense
\ben
G_{\r',\r''}\neq 0\quad \Rightarrow \quad {\rm deg}(\r')- {\rm
  deg}(\r'')\in \Z.
\een
Therefore the matrix $\mathfrak{F}$ is also block-diagonal. Since analytic
continuation around $\la=0$ corresponds to the classical monodromy
transformation $J$, we get that the vectors $\{A_\r\}_{\r\in\mathfrak{R}}$ give
an eigenbasis of $\lieh^\vee$, i.e., $J(A_\r) = e^{-2\pi\sqrt{-1}\, {\rm
    deg}(\r)}A_\r$.  

\subsection{Flat coordinates}
We follow the idea of \cite{MR}, except that we will avoid the use of
explicit formulas. It is convenient to introduce the following
multi-index notation. We will be interested in sequences 
$
\kappa=(\kappa_\r)_{\r\in \mathfrak{R}\setminus{\{\m\}} },
$
where $\kappa_\r$ are non-negative integers. Recall that $d_\r={\rm deg}(s_\r) = 1-{\rm deg}(\x^\r)$, we put
\ben
\x^\kappa := \prod_{\r\in \mathfrak{R}\setminus{\{\m\}}}\, 
(\x^{\r})^{\kappa_\r},\quad 
\frac{\s^\kappa}{\kappa!}:= 
\prod_{\r\in \mathfrak{R}\setminus{\{\m\}}}\,
\frac{s_\r^{\kappa_\r}}{\kappa_{\r}!},\quad 
d_\kappa := {\rm deg}(\s^\kappa)=\sum_{\r\in \mathfrak{R}\setminus{\{\m\}}} \kappa_\r\,d_\r,
\een 

Let us define a block-diagonal matrix $C=(C_{\r,\r'}(s))_{\r,\r'\in\mathfrak{R}}$, whose entries are
holomorphic functions on $\S$
\beq\label{Cer}
C_{\r,\r'}(\s) = \sum_{\kappa:d_\kappa=d_{\r'}} c_{\r,\kappa}(\sigma) \, 
\frac{\s^\kappa}{\kappa!},
\eeq
where the functions $c_{\r,\kappa}(\s)$ are defined from the identity
\beq\label{cek}
(-2\pi)^{-\frac{3}{2}}\, \left(
\int_0^{-\infty} e^\la \la^{{\rm deg}(\x^\kappa)+1} d\la\,\right)\, \int \x^\kappa\,  \frac{d^3\x}{df} = 
\sum_{\r\in\mathfrak{R}} c_{\r,\kappa}(\sigma)\, \Phi_\r(\si,1).
\eeq
The integration path in the first integral is the negative real
axis and the second one is interpreted as a cohomology class in
$H^2(X_{\si,1};\C).$  The identity is obtained by performing
successively integration by parts until the degree of the monomial
$\x^\kappa$ is reduced to some number in the interval $[0,1]$. In
particular, since each integration by parts decreases the degree by an
integer number, the sum on the RHS is over all $\r\in\mathfrak{R}$, s.t.,
$d_\r-d_\kappa\in \Z.$ It follows that the matrix $C$ is
block-diagonal. Given cycles $\alpha_\r\in \lieh^\vee$, $\r\in\mathfrak{R}$, we
define the following multi-valued analytic functions on $\S$:
\ben
t_\m(\s) & = & 
\frac{1}{\pi_A}\left(
C_{0,\m}(\s)\,\LD \Phi_0(\si,1),\alpha_\m\RD\, +
C_{\m,\m}(\s)\,\LD \Phi_m(\si,1),\alpha_\m\RD\, 
\right),\\
t_0(\s) & = & 
\frac{1}{\pi_A}\left(
C_{\m,0}(\s) \,\LD \Phi_\m(\si,1),\alpha_0\RD\, +
C_{0,0}(\s)\,\LD \Phi_0(\si,1),\alpha_0\RD\, \right),\\
t_\r(\s) & = & 
\frac{1}{\pi_A}\left(
\sum_{\r\in\mathfrak{R}: d_{\r'}=d_\r} C_{\r',\r}(\s)\, \LD \Phi_{\r'}(\si,1),\alpha_\r\RD\,
\right),\quad \r\in\mathfrak{R}_{\rm tw}.
\een
Note that by definition $C_{\m,\m}(\s)=0$ and $C_{0,\m}(\s)$ is a constant
independent of $\s$.
\begin{proposition}\label{flat-c}
There are cycles $\{\alpha_\r\}_{\r\in\mathfrak{R}}$ that form an eigenbasis for
the classical monodromy $J$, s.t., 
\begin{itemize}
\item[(i)]
The functions $t_\r(\s)$, $\r\in\mathfrak{R}$
form a flat coordinate system on $\S.$  
\item[(ii)]
We have $\d_0=1$
and $(\d_0,\d_\m)=1$, where $\d_\r:=\d/\d t_\r$.
\item[(iii)] The following identity holds (compare with the definition
  of $t_0(\s)$):
\ben
\frac{1}{2}\sum_{\r',\r''\in\mathfrak{R}} (\d_{\r'},\d_{\r''})\,t_{\r'}t_{\r''}= 
\frac{1}{\pi_A}
\left(
C_{\m,0}(\s) \,\LD \Phi_\m(\si,1),\alpha_\m\RD\, +
C_{0,0}(\s)\,\LD \Phi_0(\si,1),\alpha_\m\RD\right).
\een
\end{itemize} 
\end{proposition}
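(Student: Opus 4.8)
The plan is to construct the cycles $\{\alpha_\r\}$ by pairing the geometric sections against an explicit eigenbasis of $\lieh^\vee$, and then to verify the three claims by comparing the resulting functions $t_\r(\s)$ with the characterization of flat coordinates coming from the oscillatory integrals. The starting observation is that, by \eqref{F-matrix} and the discussion that follows it, the multi-valued flat sections $A_{\r'}$ form an eigenbasis of $\lieh^\vee$ for $J$, and the block-diagonal fundamental matrix $\mathfrak{F}$ expresses every $\Phi_\r(\si,1)$ in terms of these. Feeding this into \eqref{cek} shows that each $c_{\r,\kappa}(\si)$, hence each entry $C_{\r,\r'}(\s)$, is a holomorphic function on $\S$, and the degree bookkeeping recorded after \eqref{cek} makes $C$ block-diagonal as claimed. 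This is the bookkeeping half of the argument; the substance is the identification with flat coordinates.

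First I would recall from Section~\ref{asymptotic:sec} that the genuine flat coordinates $t_\r$ near $\s=0$ are characterized by the property that $\d_\r$ corresponds to $\phi_\r(\x)$ under the Kodaira--Spencer isomorphism, equivalently by the fact that the oscillatory integrals $J_{\mathcal{A}}(\t,z)$ satisfy \eqref{frob_eq1}. The key point is that $J_{\mathcal{A}}$ has an expansion whose leading (in $z$) term reproduces, after integration by parts, exactly the combination $\frac{1}{\pi_A}\sum_{\r'} C_{\r',\r}(\s)\,\langle\Phi_{\r'}(\si,1),\mathcal{A}\rangle$: this is precisely what the identity \eqref{cek} was engineered to produce, with the Gamma-type integral $\int_0^{-\infty}e^\la\la^{\deg(\x^\kappa)+1}d\la$ accounting for the asymptotic expansion of $\int_{\mathcal{A}}e^{F/z}\omega$ in powers of $z$. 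So the functions $t_\r(\s)$ defined in the statement are, by construction, the coefficients of the flat sections $A_\r$ in the asymptotic expansion of $J$. Choosing $\alpha_\r:=A_\r$ (up to normalization) then makes $(t_\r)_{\r\in\mathfrak{R}}$ a flat coordinate system, which is (i). The normalization of the $\alpha_\r$ (an overall scalar per eigenspace, plus the choice within the two-dimensional $J$-eigenspace containing $\mathbf 0$ and $\m$) is exactly the freedom we use to arrange (ii) and (iii).

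For (ii): $\d_0=1$ is the statement that $t_0$ is the flat coordinate dual to the identity $\phi_{\mathbf 0}=1$, which we secure by normalizing $\alpha_0$ so that the coefficient of the top-degree geometric section in $t_0$ matches the residue pairing; the equation $(\d_0,\d_\m)=1$ then follows because $\phi_\m$ has degree $1$ and the residue pairing pairs degree-$d$ with degree-$(1-d)$ classes nondegenerately, so $\phi_{\mathbf 0}\bullet\phi_\m$ pairs to a nonzero constant which we scale to $1$ by choosing $\alpha_\m$. For (iii): since the Frobenius potential has a well-defined genus-zero primary part, the quantity $\frac12\sum(\d_{\r'},\d_{\r''})t_{\r'}t_{\r''}$ is the quadratic term of the potential in flat coordinates, and one identifies it directly with the stated expression by expanding $J_{\mathcal A}$ to the relevant order with $\mathcal A$ dual to $\alpha_\m$ and matching against the $C_{0,0}$ and $C_{\m,0}$ blocks; the vanishing $C_{\m,\m}(\s)=0$ and the constancy of $C_{\mathbf 0,\m}(\s)$ noted before the proposition are what make the two displayed formulas for $t_0$ and for the quadratic form structurally parallel, so that (iii) becomes a clean comparison rather than a computation.

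The main obstacle is step (i): proving that these particular linear combinations of periods are honestly flat and not merely degree-homogeneous. This requires matching the $z$-expansion of the oscillatory integral $J_{\mathcal A}(\s,z)$ termwise against the iterated integration-by-parts procedure defining $c_{\r,\kappa}$, and checking that the resulting functions satisfy \eqref{frob_eq1} — in effect, that integration by parts in $\x$ intertwines correctly with the Frobenius multiplication $\bullet_\s$. One must be careful that the expansion \eqref{osc_integral} is asymptotic, so the identification is of formal series in $z$, and that the block-diagonality of $C$ is exactly what allows the degree-$d_\r$ homogeneous piece of $J$ to be isolated. Everything after that — the normalizations giving (ii), and the potential-comparison giving (iii) — is bookkeeping controlled by the degree grading and the conformal structure of dimension $1$ recorded at the end of Section~2.1.
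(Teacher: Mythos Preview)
Your outline has the right architecture --- express the oscillatory integrals in two ways and compare --- but the load-bearing step is missing and the main obstacle is misidentified.

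The gap is in how you pass from a vanishing cycle $\alpha_\r \in H_2(X_{0,1};\C)$ to an oscillatory integral $J_{\mathcal{A}_\r}$ with a computable $z$-expansion. You assert that ``$J_{\mathcal{A}}$ has an expansion whose leading term reproduces\ldots the combination'' and that \eqref{cek} ``was engineered'' for this, but you never say what the semi-infinite cycle $\mathcal{A}$ is or why its integral expands this way. The paper constructs $\mathcal{A}_\r$ explicitly from $\alpha_\r$ using the weighted-homogeneous $\C^*$-action, namely $\mathcal{A}_\r = \{(\lambda z)\cdot \y : \lambda \in (-\infty,0],\ \y \in \alpha_\r^{\sigma,1}\}$, and then applies Fubini to separate the radial $\lambda$-integral from the fiber integral over $\alpha_\r$. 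Expanding the exponential of the non-marginal deformation terms in the resulting double integral yields the formula $J_{\mathcal{A}_\r}(\s,z) = z^{1/2}\, d\big(\sum_\kappa z^{-d_\kappa}\frac{\s^\kappa}{\kappa!}\sum_\r \pi_A^{-1}c_{\r,\kappa}(\sigma)\langle\Phi_\r(\sigma,1),\alpha_\r\rangle\big)$ directly, with the Gamma-type integrals and the $c_{\r,\kappa}$ of \eqref{cek} appearing mechanically. Without this construction and computation your argument has no bridge between the cycle and the asymptotic series; ``integration by parts'' alone does not get you there.

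You also invert the difficulty. You call step (i) --- proving flatness --- the hard part and worry whether ``integration by parts intertwines with $\bullet_\s$''. But any oscillatory integral $J_{\mathcal{A}}$ satisfies \eqref{frob_eq1}--\eqref{frob_eq2} by Saito's theory; flatness is free. The substance is the comparison with the fundamental solution $S_\t(z)z^{\Theta}$ near $z=\infty$: one \emph{chooses} the cycles so that $J_{\mathcal{A}_\r} = S_\t(z)z^{\Theta}\, dt_\r$, and then the $z^{1/2-d_\r}$ term of the explicit expansion is $dt_\r$ directly, giving (i) and (ii) after normalization. For (iii) the mechanism is not that the quadratic form is ``the quadratic term of the potential'' in some vague sense: the point is that only for $\r=\m$ does the subleading term $z^{-1}S_{\t,1}(dt_\m)$ contribute at order $z^{-1/2}$, because $d_\m=0$ while $d_\r>0$ for every other $\r$; then the standard calibration identity $S_{\t,1}(\mathbf{1}) = \tfrac{1}{2}\,d\big(\sum_{\r',\r''}(\d_{\r'},\d_{\r''})t_{\r'}t_{\r''}\big)$ finishes. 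Your account of (iii) gestures at an expansion ``to the relevant order'' but does not isolate the degree accident $d_\m=0$ that makes the comparison clean.
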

\proof
Let $\si\in \Si$ be an arbitrary point. We fix a path in $\Si$ from 0
to $\si$. Our goal is to construct flat coordinates in a neighborhood
of $\si$. Given a basis of cycles $\{\alpha_\r\}_{\r\in\mathfrak{R}}$ we denote by
$\alpha_\r^{\si,1}\in H_2(X_{\si,1};\C)$ the parallel transport of
$\alpha_\r$. The polynomial $f(\si,\x)$ is weighted homogeneous, so
there is a natural $\C^*$-action on $\C^3$, s.t., $f(\si,c\cdot\x) =
cf(\si,\x)$ for every $c\in \C^*$. Using this action we define
\ben
\mathcal{A}_\r=\{ (\la z)
\cdot \y \ |\ \la\in (-\infty,0],\quad \y\in\alpha_\r^{\si,1}\}.
\een
Note that $\mathcal{A}_\r$ is a semi-infinite cycle of the type
\eqref{cycle}, so the corresponding oscillatory integral is
convergent. Using the Fubini's theorem we get
\ben
(-2\pi z)^{-3/2}\int_{\mathcal{A}_\r} e^{F(\s,\x)/z}\omega = 
(-2\pi z)^{-3/2}\int_0^{-\infty} e^{\la} \, (\la z)\, 
\int_{\alpha_\r^{\si,1}} 
e^{\sum_{\r\in\mathfrak{R}\setminus{\{\m\}}} s_\r
  \x^\r\la^{(1-d_\r)}z^{-d_\r}}\frac{\omega}{df}\,d\la 
\een
The exponential in the second integral on the RHS is 
\ben
\sum_\kappa 
\frac{\s^\kappa}{\kappa!}\,  
\x^{\kappa}\, \la^{{\rm deg}(\x^\kappa)} \, z^{-d_\kappa},
\een
where the sum is over all sequences $\kappa=(\kappa_\r)_{\r\in\mathfrak{R}\setminus{\{\m\} } }$ of non-negative integers. Substituting the
above expansion we get
\ben
(-2\pi)^{-3/2} z^{-1/2} \sum_\kappa \left(\int_0^{-\infty} e^{\la} \, \la^{1+{\rm
      deg}(\x^\kappa)}\, d\la \right)
\frac{\s^\kappa}{\kappa!}\,  z^{-d_\kappa}\, \int_{\alpha_\r^{\si,1}} 
\x^{\kappa}\,
\frac{\omega}{df}\, .
\een
Comparing with formula \eqref{cek}, we get the following formula for
the oscillatory integral
\beq\label{J-inf}
J_{\mathcal{A}_\r}(\s,z) = 
z^{\frac{1}{2} }\, d\left(
\sum_\kappa\,  z^{-d_\kappa} 
\frac{\s^\kappa}{\kappa!} \,
\sum_{\r\in\mathfrak{R}} \, 
\frac{c_{\r,\kappa}(\si)}{\pi_{A}} \LD \Phi_\r(\si,1),\alpha_\r\RD\right).
\eeq
The oscillatory integrals $J_{\mathcal{A}_\r}(\s,z)$ are solutions to the
differential equations \eqref{frob_eq1} and \eqref{frob_eq2}. On the
other hand near $z=\infty$ these equations have a fundamental
solution of the type $S_\t(z)z^{\Theta}$, where
$S_\t(z)=1+S_{\t,1}z^{-1}+\cdots$ and $S_{\t,k}\in
\operatorname{Hom}_\C(H,H)$. Therefore, we can choose the cycles
$\{\alpha_\r\}$ in such a way that 
\beq\label{JS}
J_{\mathcal{A}_\r}(\s,z) = S_\t(z)\,z^{\Theta}\, dt_\r = z^{\frac{1}{2}-d_\r} \left(
dt_\r + z^{-1}S_{\t,1}(dt_\r) +\cdots \right),
\eeq
where $\t=(t_\r)$ is a flat coordinate system. Note that $d_\r>0$ for
$\r\neq\m$ and $d_\m=0$. Therefore, we have
\ben
J_{\mathcal{A}_\r}(\s,z)  = z^{\frac{1}{2}-d_\r} dt_\r +
z^{\frac{1}{2}-1}\delta_{\r,\m}
S_{\t,1}(dt_\m)+\cdots, 
\een
where the dots stand for terms involving higher order powers of
$z^{-1}$. 
Let us choose the flat
coordinates in such a way that $\d_0= \mathbf{1}$ and $(\d_\m,\d_0)=1$, then we
have
\ben
S_{\t,1} (dt_\m) = S_{\t,1} (\mathbf{1}) =\frac{1}{2} d\,\left( \sum_{\r',\r''\in\mathfrak{R}}
(\d_{\r'},\d_{\r''})\, t_{\r'}t_{\r''} \right).
\een
Finally we get 
\ben
J_{\mathcal{A}_\r}(\s,z)  = z^{\frac{1}{2}}d 
\left( z^{-d_\r}t_\r + z^{-1}\delta_{\r,\m}\,\frac{1}{2}\sum_{\r',\r''\in\mathfrak{R}}
(\d_{\r'},\d_{\r''})\,t_{\r'}t_{\r''}  \right) + \cdots.
\een
All statements in the Proposition follow by comparing the above
formula with \eqref{J-inf}.
\qed 

\subsection{The monodromy of the flat coordinates}
Let us choose the fundamental matrix $\mathfrak{F}$ of the Picard--Fuchs
equations (see \eqref{F-matrix}) in such a way that
$\{\alpha_\r\}_{\r\in \mathfrak{R}}$ and $\{A_\r\}_{\r\in \mathfrak{R}}$ are dual
bases. Furthermore, since $C_{0,\m}\,\alpha_\m$ is a tube cycle we can
find $B\in H_1(E_0;\C)$ such that $L(B):=C_{0,\m}\,\alpha_\m$, so we have
\ben
t_\m = \frac{\pi_B(\si)}{\pi_A(\si)}. 
\een
The flat coordinate $t_0$ is such that $\d/\d t_0=1$. Therefore, the
coefficient in front of $s_0$ in $t_0(\s)$ must be 1. On the other
hand, using formulas \eqref{Cer} and \eqref{cek} we get
\ben
C_{0,\m}(\s) = c_{0,0},\quad C_{0,0}(\s) = c_{0,0}\, s_0 + \cdots,
\een 
where the dots stand for at least quadratic terms in $\s$. It follows
that $L(A) = C_{0,\m}\, \alpha_0.$

Let $\gamma$ be a loop in $\Si$ based at the reference point
$\si=0$. Let us denote by $[\rho_{\neq 0}(\gamma)]_{\r,\r'}$ the matrix
of the linear operator $\rho_{\neq 0}(\gamma)$ in the basis
$\{\alpha_r\}_{\r\in\mathfrak{R}_{\rm tw}}$, i.e., 
\ben
\rho_{\neq 0}(\gamma)(\alpha_{\r})= \sum_{\r'\in \mathfrak{R}_{\rm tw}} [\rho_{\neq
  0}(\gamma)]_{\r',\r}\, \alpha_{\r'}.
\een
Since the monodromy representation $\rho_{\neq 0}$ commutes with the
classical monodromy $J$ and $\{\alpha_\r\}$ is an eigenbasis for $J$,
the matrix $[\rho_{\neq 0}(\gamma)]$ is block diagonal
\ben
[\rho_{\neq 0}(\gamma)]_{\r',\r}\neq 0 \quad\Rightarrow\quad 
d_{\r'}=d_\r.
\een
Similarly let us denote by $[\rho_0(\gamma)]$ the matrix of $\rho_0(\gamma)$ in the
basis $\{\alpha_\m,\alpha_0\}$
\ben
\rho_0(\gamma)(\alpha_\m)& = & [\rho_0(\gamma)]_{\m,\m}\alpha_\m+
[\rho_0(\gamma)]_{0,\m}\alpha_0 ,\\
\rho_0(\gamma)(\alpha_0) & = & [\rho_0(\gamma)]_{\m,0}\alpha_\m+
[\rho_0(\gamma)]_{0,0}\alpha_0 .
\een
The space $\lieh_0\cong H_1(E_0,\C)$ is equipped with a symplectic
form that comes from the intersection pairing. By continuity, the
linear transformation $\rho_0(\gamma)$ is a symplectic transformation,
i.e., the matrix
\ben
\begin{bmatrix}
a & b \\
c & d
\end{bmatrix}:= [\rho_0(\gamma)]^T = 
\begin{bmatrix}
[\rho_0(\gamma)]_{\m,\m} & [\rho_0(\gamma)]_{0,\m} \\
[\rho_0(\gamma)]_{\m,0} & [\rho_0(\gamma)]_{0,0}
\end{bmatrix}\quad \in \operatorname{SL}_2(\C).
\een 
An immediate corollary of Proposition \ref{flat-c} is the
transformation rule for the flat coordinates under the analytic
continuation along $\gamma$.
\begin{corollary}\label{flat-c-mon}
The analytic continuation along a loop $\gamma$ transforms the flat
coordinates as follows
\ben
\widetilde{t}_\m & = &  \frac{a t_\m + b}{c t_\m + d} ,\\
\widetilde{t}_0 & = &  t_0 + \frac{c}{2\,(c t_\m +d)} 
\sum_{\r',\r''\in  \mathfrak{R}_{\rm tw}} (\d_{\r'},\d_{\r''})\, t_{\r'}t_{\r''},\\
\widetilde{t}_\r & = & \frac{1}{ct_\m+d}\sum_{\r'\in \mathfrak{R}_{\rm tw}: d_{\r'}=d_\r} [\rho_{\neq
  0}(\gamma)]_{\r',\r}\, t_{\r'},\quad \r\in \mathfrak{R}_{\rm tw}.  
\een
\end{corollary}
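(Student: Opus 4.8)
The plan is to obtain the three transformation rules by inserting, into the explicit presentations of $t_\m$, $t_0$ and $t_\r$ from Proposition \ref{flat-c}, the monodromy action along $\gamma$; the whole point is to keep careful track of which ingredients of those presentations are single-valued on $\Si$ and which carry monodromy.

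The first step is to check that the matrix $C=(C_{\r,\r'}(\s))_{\r,\r'\in\mathfrak{R}}$ and the geometric sections $\Phi_\r(\si,1)$ are single-valued on $\Si$. By \eqref{Cer} and \eqref{cek}, each $C_{\r,\r'}(\s)$ is a polynomial in the coordinates $s_\r$ whose coefficients $c_{\r,\kappa}(\si)$ are, up to an explicit $\kappa$-dependent constant, the coordinates of the globally defined form $\int\x^\kappa\,d^3\x/df$ in the frame $\{\Phi_\r(\si,1)\}_{\r\in\mathfrak{R}}$ of the vanishing cohomology bundle; since that frame and that form are single-valued holomorphic sections over $\Si$, so are the $c_{\r,\kappa}$, hence so is $C$, and the $\Phi_\r(\si,1)$ are single-valued for the same reason. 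Therefore the only sources of monodromy in the formulas of Proposition \ref{flat-c} are the period $\pi_A(\si)$ and the flat cycles $\alpha_\r$ paired against the $\Phi_\r$: analytic continuation along $\gamma$ replaces $\LD\Phi_\r(\si,1),\alpha_{\r'}\RD$ by $\LD\Phi_\r(\si,1),\rho(\gamma)\alpha_{\r'}\RD$ and leaves everything else fixed (this convention fixes the orientation of $\gamma$). By the definitions recalled just above the Corollary, $\rho_0(\gamma)$ sends $\alpha_\m\mapsto a\alpha_\m+b\alpha_0$ and $\alpha_0\mapsto c\alpha_\m+d\alpha_0$, while $\rho_{\neq 0}(\gamma)$ acts on $\{\alpha_\r\}_{\r\in\mathfrak{R}_{\rm tw}}$ by the block-diagonal matrix $[\rho_{\neq 0}(\gamma)]$, with $[\rho_{\neq 0}(\gamma)]_{\r',\r}\neq 0$ only when $d_{\r'}=d_\r$.

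Granting this, $t_\m$ and the $t_\r$ are immediate. From $C_{\m,\m}=0$ together with $L(A)=C_{0,\m}\alpha_0$, $L(B)=C_{0,\m}\alpha_\m$ ($C_{0,\m}$ a nonzero constant) we have $\pi_A=C_{0,\m}\LD\Phi_0,\alpha_0\RD$, $\pi_B=C_{0,\m}\LD\Phi_0,\alpha_\m\RD$ and $t_\m=\pi_B/\pi_A$; continuation along $\gamma$ sends $(\pi_B,\pi_A)$ to $(a\pi_B+b\pi_A,\ c\pi_B+d\pi_A)$, so $\widetilde\pi_A=(c t_\m+d)\,\pi_A$ and $\widetilde t_\m=(a t_\m+b)/(c t_\m+d)$. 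The normalization $\widetilde\pi_A=(c t_\m+d)\,\pi_A$ is the one fact that must be carried along, since this factor reappears in every other coordinate. For $\r\in\mathfrak{R}_{\rm tw}$, continuing $t_\r=\pi_A^{-1}\sum_{\r':\,d_{\r'}=d_\r}C_{\r',\r}(\s)\LD\Phi_{\r'},\alpha_\r\RD$ replaces $\alpha_\r$ by $\sum_{\r'':\,d_{\r''}=d_\r}[\rho_{\neq 0}(\gamma)]_{\r'',\r}\,\alpha_{\r''}$ and the prefactor $\pi_A^{-1}$ by $(c t_\m+d)^{-1}\pi_A^{-1}$; here one uses the structural fact, visible in \eqref{Cer}, that $C_{\r',\r}(\s)$ depends on $\r$ only through its degree $d_\r$, so that $\pi_A^{-1}\sum_{\r':\,d_{\r'}=d_\r}C_{\r',\r}(\s)\LD\Phi_{\r'},\alpha_{\r''}\RD$ equals $t_{\r''}$ whenever $d_{\r''}=d_\r$. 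This gives $\widetilde t_\r=(c t_\m+d)^{-1}\sum_{\r''\in\mathfrak{R}_{\rm tw}:\,d_{\r''}=d_\r}[\rho_{\neq 0}(\gamma)]_{\r'',\r}\,t_{\r''}$.

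Finally, write $t_0=\pi_A^{-1}\big(C_{\m,0}(\s)\LD\Phi_\m,\alpha_0\RD+C_{0,0}(\s)\LD\Phi_0,\alpha_0\RD\big)$ and continue: $\alpha_0$ becomes $c\alpha_\m+d\alpha_0$ and $\pi_A^{-1}$ becomes $(c t_\m+d)^{-1}\pi_A^{-1}$. The part proportional to $d$ reproduces $t_0$; the part proportional to $c$ is $\pi_A^{-1}\big(C_{\m,0}(\s)\LD\Phi_\m,\alpha_\m\RD+C_{0,0}(\s)\LD\Phi_0,\alpha_\m\RD\big)$, which by Proposition \ref{flat-c}(iii) equals $\tfrac{1}{2}\sum_{\r',\r''\in\mathfrak{R}}(\d_{\r'},\d_{\r''})\,t_{\r'}t_{\r''}$. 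Because the residue pairing is degree-homogeneous of conformal dimension $1$, $(\d_{\r'},\d_{\r''})=0$ unless $d_{\r'}+d_{\r''}=1$; the only $\r\in\mathfrak{R}$ with $d_\r\in\{0,1\}$ are $\mathbf{0}$ (with $d_{\mathbf{0}}=1$) and $\m$ (with $d_\m=0$), and $(\d_0,\d_\m)=1$ by Proposition \ref{flat-c}(ii), so this sum splits as $t_0t_\m+\tfrac{1}{2}\sum_{\r',\r''\in\mathfrak{R}_{\rm tw}}(\d_{\r'},\d_{\r''})\,t_{\r'}t_{\r''}$. Adding the two parts, dividing by $c t_\m+d$, and combining $d\,t_0$ with $c\,t_0 t_\m$ into $t_0(c t_\m+d)$, one obtains $\widetilde t_0=t_0+\tfrac{c}{2(c t_\m+d)}\sum_{\r',\r''\in\mathfrak{R}_{\rm tw}}(\d_{\r'},\d_{\r''})\,t_{\r'}t_{\r''}$, as claimed. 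The two places where care is genuinely needed are the single-valuedness bookkeeping of the first step and the observation that $C_{\r',\r}$ sees $\r$ only through $d_\r$; neither is a serious obstacle, which is why this is recorded as an immediate corollary of Proposition \ref{flat-c}.
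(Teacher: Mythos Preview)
Your argument is correct and is precisely the derivation the paper has in mind when it calls this an ``immediate corollary of Proposition \ref{flat-c}''; you have simply written out the bookkeeping (single-valuedness of $C$ and of the geometric sections, the splitting $\tfrac12\sum_{\mathfrak{R}}=t_0t_\m+\tfrac12\sum_{\mathfrak{R}_{\rm tw}}$, and the key observation from \eqref{Cer} that $C_{\r',\r}$ depends on $\r$ only through $d_\r$) that the paper leaves implicit.
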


\subsection{Monodromy of the asymptotical operator}

Recall the notation from Section \ref{asymptotic:sec}. Let us identify
the space of linear operators $\operatorname{Hom}_\C(\C^\mu,H)$ with
the space of square matrices of size $\mu$ by fixing the a basis
$\{\phi_\r(\x):=\x^\r\in H; \r\in\mathfrak{R}\}$ and a standard basis $\{e_i\in \C^\mu; 1\leq i\leq \mu\}$, i.e., 
\ben
\mathbb{A}(e_i)=\sum_{\r\in \mathfrak{R}} [\mathbb{A}]_{\r,i} \, \phi_\r.
\een
The asymptotical operator $\Psi_\s\,R_\s\,e^{U_\s/z}$ can be viewed as a matrix with entries
formal asymptotical series. Let us fix a loop $\gamma$ in $\Sigma$. We
would like to find out how the operator changes under the analytic
continuation along $\gamma$. The answer can be stated in the following
way. Let $M(\gamma,t) \in \operatorname{Hom}_\C(H,H)$ be the
operator whose matrix is
\ben
[M(\gamma,t)]_{\r',\r} = \frac{1}{c t_\m+d} \, 
\frac{\d t_{\r'}}{\d\widetilde{t}_\r} -
z\,c\,\delta_{\m,\r}\delta_{0,\r'},
\een
where $c,d,$ and $\widetilde{t}_\r$ are determined via $\gamma$ as it
was stated in Corollary \ref{flat-c-mon}.  Analytic continuation along
$\gamma$ transforms the sequence of critical values
$(u_1(\s),\dots,u_\mu(\s))$ via some permutation $p$. Let us denote by
$P(\gamma)\in  \operatorname{Hom}_\C(\C^\mu,\C^\mu)$ the linear
operator whose matrix is given by
\ben
[P(\gamma)]_{i,j} = \delta_{i,p(j)}.
\een
\begin{proposition}\label{pr:mon-aop}
The analytic continuation along the loop $\gamma$ transforms the
asymptotical operator $\Psi_\s\,R_\s\,e^{U_\s/z}$ into 
\ben
\leftexp{T}{M}(\gamma,t)\, \Psi_\s\,R_\s\,e^{U_\s/z}\, P(\gamma),
\een
where for a linear operator $\mathbb{A}:H\to H$ we denote by $\leftexp{T}{\mathbb{A}}$
the transpose of $\mathbb{A}$ with respect to the residue pairing $(\cdot,\cdot)$. 
\end{proposition}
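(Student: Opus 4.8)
The plan is to track the asymptotical operator $\Psi_\s R_\s e^{U_\s/z}$ through analytic continuation by using its characterization as the (unique) formal asymptotical solution to the system \eqref{frob_eq1}--\eqref{frob_eq2}, together with the oscillatory-integral realization of the solutions. First I would record how the two building blocks transform. The matrix $U_\s = \operatorname{diag}(u_1(\s),\dots,u_\mu(\s))$ is permuted by $P(\gamma)$: after continuation $e^{U_\s/z}$ becomes $e^{U_\s/z}P(\gamma)$, since the critical values are permuted by $p$. The more delicate part is the effect on the ``$S$-matrix side'' of the story. Recall from \eqref{JS} that the oscillatory integrals $J_{\mathcal{A}_\r}$ are expressed through the fundamental solution $S_\t(z)z^\Theta$ near $z=\infty$, and the stationary-phase asymptotics expresses the \emph{same} integrals $J_{\B_i}$ through $\Psi_\s R_\s e^{U_\s/z}$ near $z=0$. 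The two fundamental solutions differ by a constant (in $\t$) symplectic matrix, the so-called calibration/$R$-matrix ambiguity; this is exactly the compatibility that makes the total ancestor potential well defined. So monodromy of the asymptotical operator is governed by monodromy of $S_\t(z)$ composed with the permutation $P(\gamma)$.

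Next I would compute the monodromy of $S_\t(z)$. The columns of $S_\t(z)z^\Theta$ are the $J_{\mathcal{A}_\r}$, and Corollary \ref{flat-c-mon} already tells us how the flat coordinates $t_\r$ — equivalently, the leading terms $dt_\r$ of these columns — transform. The key computation is to differentiate the transformation rules of Corollary \ref{flat-c-mon}: from $\widetilde t_\m=(at_\m+b)/(ct_\m+d)$, $\widetilde t_0=t_0+\tfrac{c}{2(ct_\m+d)}\sum_{\r',\r''}(\d_{\r'},\d_{\r''})t_{\r'}t_{\r''}$, and $\widetilde t_\r=(ct_\m+d)^{-1}\sum [\rho_{\neq 0}(\gamma)]_{\r',\r}t_{\r'}$ for $\r\in\mathfrak{R}_{\rm tw}$, one reads off the Jacobian $\partial t_{\r'}/\partial\widetilde t_\r$. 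A direct chain-rule calculation, using $\partial\widetilde t_\m/\partial t_\m=(ct_\m+d)^{-2}$ (since $ad-bc=1$), shows that $\partial t_{\r'}/\partial\widetilde t_\r$ equals $(ct_\m+d)[M(\gamma,t)]_{\r',\r}$ plus the correction term $z c(ct_\m+d)\delta_{\m,\r}\delta_{0,\r'}$; the $\delta_{\m,\r}\delta_{0,\r'}$ term is precisely where the quadratic piece in $\widetilde t_0$ contributes the $c$-linear, $z$-linear shift, reflecting that the asymptotical operator is an operator with values in $z$-dependent transformations. This identifies the prefactor as $\leftexp{T}{M}(\gamma,t)$ once one passes to transposes with respect to the residue pairing (the columns/rows bookkeeping between $\operatorname{Hom}(\C^\mu,H)$ and its dual is what produces the transpose, consistently with how $\Psi_\s$ acts on the $e_i$).

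I would then assemble the pieces: after continuation along $\gamma$, the fundamental solution $S_\t(z)z^\Theta$ is replaced by $\leftexp{T}{M}(\gamma,t)\,S_\t(z)z^\Theta$ (the $z^\Theta$ factor is unchanged because $\Theta$ is a constant grading operator and $d_\r$ depends only on $d_{\r'}=d_\r$ within each block), while the diagonal asymptotics contributes $e^{U_\s/z}\mapsto e^{U_\s/z}P(\gamma)$; uniqueness of the formal asymptotical solution of \eqref{frob_eq1}--\eqref{frob_eq2} with the prescribed leading behavior then forces $\Psi_\s R_\s e^{U_\s/z}\mapsto \leftexp{T}{M}(\gamma,t)\,\Psi_\s R_\s e^{U_\s/z}\,P(\gamma)$, which is the claim. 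The main obstacle I anticipate is not the algebra of the Jacobian but the careful justification that the continuation of the \emph{asymptotic} (divergent) series $R_\s(z)$ is pinned down by the continuation of the honest analytic solution $S_\t(z)$ — i.e., that the passage from the $z=\infty$ fundamental solution to the $z=0$ stationary-phase asymptotics is compatible with monodromy. This is handled by invoking Givental's results (cited as \cite{G1,G2}): the asymptotical operator is \emph{determined} by the Frobenius structure and the choice of branch of flat coordinates near $\s$, so any monodromy transformation of the data transforms the asymptotical operator in the unique compatible way, and one only needs to check the leading-order/$z$-linear terms, which is exactly the computation above.
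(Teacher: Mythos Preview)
Your overall strategy---track the monodromy through the oscillatory-integral/asymptotical characterization and combine with the permutation of critical values---is sound and matches the paper's. But there is a genuine gap in how you account for the $z$-linear piece of $M(\gamma,t)$.

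You write that ``$\partial t_{\r'}/\partial\widetilde t_\r$ equals $(ct_\m+d)[M(\gamma,t)]_{\r',\r}$ plus the correction term $zc(ct_\m+d)\delta_{\m,\r}\delta_{0,\r'}$'' and attribute this correction to ``the quadratic piece in $\widetilde t_0$.'' This cannot be right: the Jacobian $\partial t_{\r'}/\partial\widetilde t_\r$ is a function of the flat coordinates alone and is manifestly $z$-independent, so no chain-rule computation with Corollary~\ref{flat-c-mon} can produce a $z$-linear term. What you have written is just the definition of $M$ rearranged; it does not explain where the $-zc\,\delta_{\m,\r}\delta_{0,\r'}$ term actually originates.

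The paper's argument is more direct and identifies the missing ingredient. It works with the matrix entries
\[
(\phi_\r,\Psi_\s R_\s e^{U_\s/z}e_i)=z\,\partial_{t_\r}\!\left(I_i(\s,z)/\pi_A\right),
\]
where $I_i$ is the undifferentiated steepest-descent integral with respect to the \emph{bare} form $d^3\x$. Under monodromy three things happen simultaneously: the cycle $\B_i$ is permuted ($I_i\mapsto I_{p(i)}$), the flat derivative becomes $\partial_{\widetilde t_\r}$, and---this is what you are missing---the primitive form normalization transforms as $\pi_A\mapsto (ct_\m+d)\pi_A$. Applying Leibniz to $z\,\partial_{\widetilde t_\r}\bigl(I_{p(i)}/(\pi_A(ct_\m+d))\bigr)$, the derivative hitting $1/(ct_\m+d)$ produces $-zc\,\delta_{\m,\r}\cdot I_{p(i)}/\pi_A$. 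The string equation $I_{p(i)}/\pi_A=z\,\partial_{t_0}(I_{p(i)}/\pi_A)$ (a consequence of $\partial_0=1$) then converts this into $-zc\,\delta_{\m,\r}$ times the $(0,p(i))$ matrix entry, which is exactly the $-zc\,\delta_{\m,\r}\delta_{0,\r'}$ contribution to $M$. Your detour through $S_\t(z)z^\Theta$ and the ``constant symplectic matrix'' relating the two fundamental solutions obscures this, and your uniqueness argument at the end cannot pin down the $z$-linear piece without it.
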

\proof
Let us denote by $I_i(\s,z)(1\leq i\leq \mu)$ the stationary phase
asymptotic of the oscillatory integral
\ben
(-2\pi z)^{-3/2} \int_{\B_i} e^{F(\s,\x)/z} d^3\x.
\een
By definition the asymptotical operator is defined by the following identity:
\ben
(\phi_\r,\Psi_\s\,R_\s\,e^{U_\s/z}\, e_i) = z\,\frac{\d}{\d t_\r}\left(\frac{I_i(\s,z)}{\pi_A}\right),\quad
\r\in \mathfrak{R},\quad 1\leq i\leq \mu.
\een
The analytic continuation along $\gamma$ transforms the above matrix
into
\beq\label{ei-entry}
 z\,\frac{\d}{\d \widetilde{t}_\r}\left(\frac{I_{p(i)}(\s,z)}{\pi_A\,(ct_\m+d)}\right)
=\sum_{\r'\in \mathfrak{R}} 
z\,\frac{\d}{\d t_{\r'}}
\left(\frac{I_{p(i)}(\s,z)}{\pi_A\,(ct_\m+d)}\right)\frac{\d t_\r'}{\d \widetilde{t}_\r},
\eeq
Note that 
\ben
\sum_{\r'\in \mathfrak{R}}\, z\,\frac{\d}{\d t_{\r'}}\left(\frac{1}{ct_\m+d}\right)\frac{\d t_{\r'}}{\d \widetilde{t}_\r} = z\,\frac{\d t_\m}{\d \widetilde{t}_\m}\,\left(\frac{-c}{(ct_\m+d)^{2}}\right)\delta_{\m,\r} = -z\,c\,\delta_{\m,\r}.
\een
and
\ben
\frac{I_{p(i)}(\s,z)}{\pi_A} = z\,\frac{\d}{\d t_0}\left(\frac{I_{p(i)}(\s,z)}{\pi_A}\right).
\een
Hence the RHS of \eqref{ei-entry} becomes 
\ben
\sum_{\r'\in \mathfrak{R}} \left(\frac{z}{ct_\m+d}\right)\frac{\d t_{\r'}}{\d \widetilde{t}_\r}\frac{\d}{\d t_{r'}}\left(\frac{I_{p(i)}(\s,z)}{\pi_A}\right)
-\left(z\,c\,\delta_{\m,\r} \right)\ z\,\frac{\d}{\d t_0}\left(\frac{I_{p(i)}(\s,z)}{\pi_A}\right).
\een
It remains only to check that the above expression coincides with 
\ben
\left(M(\gamma,t)(\phi_\r), \Psi_\s\, R_\s\, e^{U_\s/z}\,P\, e_i\right) = \sum_{\r'\in\mathfrak{R}}
[M(\gamma,t)]_{\r',\r}\, z\,\frac{\d}{\d t_{\r'}}\left(\frac{I_{p(i)}(\s,z)}{\pi_A}\right). \qed
\een

Let us introduce the linear operators (cf. Corollary \ref{flat-c})
\ben
J(\gamma,t):H\to H,\quad [J(\gamma,t)]_{\r',\r} = \frac {\d t_{\r'}}{\d \widetilde{t}_\r}
\een
and 
\ben
X(\gamma,t):\H\to \H,\quad 
X(\gamma,t) = 1-\left(\frac{c z}{ct_\m+d}\right) \phi_\m\bullet_{\s=0},
\een
where $\phi_\m\bullet_{\s=0}:H\to H$ is the operator of multiplication by $\phi_\m$ in the Jacobi algebra $H$. Note that $X(\gamma,t)$ is a symplectic transformation. 
\begin{proposition}\label{ans:transf}
The analytic continuation along the loop $\gamma$ transforms the total ancestor potential $\A_\s(\hbar;\q)$ into
\ben
\left(\widehat{X}(\gamma,t)\, {\A}_\s\right)\left((ct_\m+d)^2\hbar;J(\gamma,t)\q\right),
\een
where we first apply the operator $ \widehat{X}(\gamma,t)$ and then we rescale $\hbar$ and $\q$.
\end{proposition}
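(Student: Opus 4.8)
The plan is to feed the transformation law of Proposition~\ref{pr:mon-aop} into the defining formula~\eqref{ancestor} for $\A_\s$ and unwind the result into $\widehat{X}(\gamma,t)$ followed by the two rescalings. First I would record what analytic continuation does to the ingredients of~\eqref{ancestor} besides the asymptotical operator. Writing the $\operatorname{SL}_2$-matrix of $\gamma$ as $\begin{bmatrix} a & b\\ c& d\end{bmatrix}$, the fact (built into Proposition~\ref{flat-c} and Corollary~\ref{flat-c-mon}) that $t_\m=\pi_B/\pi_A$ is acted on fractional-linearly forces the period of the primitive form to transform as $\pi_A\mapsto (ct_\m+d)\,\pi_A$. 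Hence the residue pairing is rescaled by $(ct_\m+d)^{-1}$, so the Hessians of the canonical coordinates transform as $\Delta_i\mapsto (ct_\m+d)\,\Delta_{p(i)}$, where $p$ is the permutation of the critical values induced by $\gamma$ (the values $u_i$ being merely permuted, as they do not depend on the volume form). Moreover, combining the definition~\eqref{change} of the $\widehat{\Psi_\s}$-change of variables with $\partial t_{\r'}/\partial\widetilde t_\r=[J(\gamma,t)]_{\r',\r}$, one sees that continuation replaces the argument $\leftexp{\it i}{\bf q}(z)\sqrt{\Delta_i}$ of the $i$-th Kontsevich--Witten factor by the corresponding argument of the $p(i)$-th factor with $\q$ replaced by $J(\gamma,t)\,\q$ and $\Delta$ rescaled by $(ct_\m+d)$.

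The crux is an operator identity expressing $\leftexp{T}{M}(\gamma,t)$ in terms of the symplectic transformation $X(\gamma,t)$, the Jacobian $J(\gamma,t)$, and the scalar $(ct_\m+d)^{-1}$: concretely I would check, entry by entry in the monomial basis $\{\phi_\r\}$, that $\leftexp{T}{M}(\gamma,t)$ factors as $(ct_\m+d)^{-1}$ times $\leftexp{T}{J}(\gamma,t)$ composed with a symplectic operator whose quantization is $\widehat{X}(\gamma,t)$. Every entry is routine except the one carrying the $-zc$ term in the definition of $M(\gamma,t)$; this one is accounted for precisely by the quadratic term in the transformation law for $t_0$ in Corollary~\ref{flat-c-mon} (equivalently, by Proposition~\ref{flat-c}(iii)), together with the algebraic fact that $\phi_\m$ generates the one-dimensional socle of the Jacobi algebra $H$ --- its degree $3-2(q_1+q_2+q_3)$ equals $1$ in all three cases --- so that $\phi_\m\bullet_{\s=0}$ is nilpotent with $\phi_\m\bullet_{\s=0}\mathbf{1}=\phi_\m$ and $\phi_\m\bullet_{\s=0}\phi_\r=0$ for every twisted $\r$. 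Because of this nilpotency, $X(\gamma,t)=1+zA$ with $A=-\frac{c}{ct_\m+d}\,\phi_\m\bullet_{\s=0}$ nilpotent, so $\widehat{X}(\gamma,t)$ is well defined with no central-cocycle ambiguity.

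Next I would quantize and assemble. Using that the quantization is multiplicative modulo the cocycle~\eqref{cocycle}, and that for the operators at hand --- $X(\gamma,t)=1+zA$ with $A$ nilpotent, $J(\gamma,t)$ triangular for the degree filtration, $P(\gamma)$ a permutation --- the cocycle contributions either vanish or are $\q$-independent and cancel, I would get the following. The operator $\widehat{P(\gamma)}$ merely re-labels the $\mu$ identical Kontsevich--Witten factors. The operator $\widehat{\leftexp{T}{J}(\gamma,t)}$ is the linear change of variables $\q(z)\mapsto J(\gamma,t)\,\q(z)$, which matches the change of $\q$ noted in the first paragraph. The scalar $(ct_\m+d)^{-1}$ in $\leftexp{T}{M}(\gamma,t)$ together with the rescaling $\Delta_i\mapsto (ct_\m+d)\Delta_{p(i)}$ rescales the first arguments $\hbar\Delta_i$ of the factors $\D_{\rm pt}$, and by the homogeneity of the Kontsevich--Witten tau function (equivalently the dilaton equation) this is absorbed into the single rescaling $\hbar\mapsto (ct_\m+d)^2\hbar$. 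Finally the remaining symplectic factor of $\leftexp{T}{M}(\gamma,t)$ contributes exactly $\widehat{X}(\gamma,t)$. Collecting, the continued potential is $(\widehat{X}(\gamma,t)\,\A_\s)((ct_\m+d)^2\hbar;\,J(\gamma,t)\q)$.

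The step I expect to be the main obstacle is this last one: verifying that the scalar $(ct_\m+d)^{-1}$ in $M(\gamma,t)$, the permutation $P(\gamma)$, the Hessian rescaling $\Delta_i\mapsto(ct_\m+d)\Delta_{p(i)}$, and the change of variables~\eqref{change} combine, via the homogeneity of $\D_{\rm pt}$, into precisely the rescaling $\hbar\mapsto(ct_\m+d)^2\hbar$, $\q\mapsto J(\gamma,t)\q$ --- in particular that the dilaton shift is respected and that no stray constants or powers of $z$ survive. The $\operatorname{SL}_2$-relation $ad-bc=1$ and the nilpotency of $\phi_\m\bullet_{\s=0}$ are what make this come out on the nose.
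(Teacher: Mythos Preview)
Your strategy---feed Proposition~\ref{pr:mon-aop} into~\eqref{ancestor}, factor $\leftexp{T}{M}$, quantize, collect---is exactly the paper's. But the bookkeeping contains a concrete error that propagates. The residue pairing~\eqref{res:pairing} depends \emph{quadratically} on the primitive form, not linearly: passing to unimodular coordinates for $\omega$ brings in a second factor of $\pi_A^{-1}$, so under $\pi_A\mapsto(ct_\m+d)\pi_A$ one has $(\cdot,\cdot)\mapsto(ct_\m+d)^{-2}(\cdot,\cdot)$ and hence $\Delta_i\mapsto(ct_\m+d)^{2}\Delta_{p(i)}$, not $(ct_\m+d)\Delta_{p(i)}$. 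This single fact produces \emph{both} rescalings at once, $\hbar\Delta_i\mapsto(ct_\m+d)^2\hbar\Delta_i$ and $\leftexp{\it i}{\bf q}\sqrt{\Delta_i}\mapsto(ct_\m+d)\,\leftexp{\it i}{\bf q}\sqrt{\Delta_i}$, with no dilaton-equation gymnastics needed.

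With that corrected, the clean way to handle the operator is the paper's: write $M=M_0+zM_1$ with $M_0=(ct_\m+d)^{-1}J(\gamma,t)$, verify the identity $M_0\,\leftexp{T}{M}=X(\gamma,t)$ (your ``entry-by-entry'' check; note it forces $\leftexp{T}{M_0}=M_0^{-1}$), and observe that $\widehat{M_0}^{-1}$ is the change of variables $\q\mapsto M_0\q=(ct_\m+d)^{-1}J\q$, which combines with the $(ct_\m+d)$-rescaling of $\q$ coming from $\sqrt{\Delta_i}$ to yield $\q\mapsto J\q$. Your proposed split---routing the scalar $(ct_\m+d)^{-1}$ into the $\hbar$-rescaling and asserting that $\widehat{\leftexp{T}{J}}$ alone is $\q\mapsto J\q$---is not what quantizing a $z$-independent operator does: both pieces act as linear substitutions on $\q$, and neither touches $\hbar$ directly. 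Your observations about $P(\gamma)$, the vanishing of the cocycle on the relevant operators, and the nilpotency of $\phi_\m\bullet_{\s=0}$ are correct and are used (implicitly) in the paper's argument.
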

\proof
We may assume that $P(\gamma,t)=1$ because $P$ is a permutation
matrix, so its quantization $\widehat{P}$ will leave the product of
Kontsevich--Witten tau functions invariant. Put $M=M_0+z\,M_1.$ Then we
have 
\ben
\leftexp{T}{M}\,\Psi_\s\, R_\s\, e^{U_\s/z} = \widetilde{\Psi}_\s\, \widetilde{R}_\s\, e^{U_\s/z},\quad
\mbox{where}\quad \widetilde{\Psi}_\s=M_0^{-1}\,\Psi_\s,\quad \widetilde{R}_\s=\Psi_\s^{-1}M_0\,\leftexp{T}{M}\,\Psi_\s\, R_\s.
\een
The quantization is in general only a projective representation. However, the quantization of the operators $\Psi_\s^{-1}\,M_0\,\leftexp{T}{M}\,\Psi_\s$ and $R_\s$ involves quantizing only $p^2$ and $p\,q$-terms. Since the cocycle \eqref{cocycle} on such terms vanishes we get
\ben
\left(\widetilde{R}_\s\right)\sphat = \left(\Psi_\s^{-1}\,M_0\,\leftexp{T}{M}\,\Psi_\s\right)\sphat\
\ \widehat{R}_\s.
\een
The operators $M_0$ and $\Psi_\s$ are independent of $z$ and their quantizations by definition are just changes of variables. Hence
\ben
\left(\widetilde{\Psi}_\s\,\widetilde{R}_\s\right)\sphat = \widehat{M_0}^{-1} \left(M_0\,\leftexp{T}{M}\right)\sphat \ \left(\Psi_\s\,R_\s\right)\sphat\ .
\een
By definition $\Delta_i^{-1}$ is $(\d_{u_i},\d_{u_i})$, which gains a
factor of $(ct_\m+d)^{-2}$ under analytic continuation. The ancestor
potential \eqref{ancestor} is transformed into 
\beq\label{ancestor:mon1}
\widehat{M_0}^{-1} \left(M_0\,\leftexp{T}{M}\right)\sphat\ \left( \A_\s\left((ct_\m+d)^2\hbar ;(ct_\m+d) \q\right)\right).
\eeq
Note that 
\ben
M_0^{-1} = (ct_\m+d)\ J(\gamma,t)^{-1},\quad
M_0\,(\leftexp{T}{M})= X(\gamma,t).
\een
It remains only to  notice that the rescaling
\ben
(\hbar,\q)\mapsto \left((ct_\m+d)^2\hbar,(ct_\m+d)\q\right)
\een
commutes with the action of any quantized operator.
\qed

\subsection{Quasi-modularity}
Let us denote by $'t=(t_{\r})_{\r\in \mathfrak{R}\setminus{ \{\m\} } }$ the non-marginal
flat coordinates. It is known that the ancestor potential $\A_\s$
depends analytically on $\s\in D$, where $D\subset \S$ is any open
domain in which the primitive form is single-valued. In particular for
each fixed $\si\in \Si$, we can take the limit 
\ben
\A_\si(\hbar;\q):=\lim_{'t\to 0} \ \A_\s(\hbar;\q).
\een
For the proof of the above statement see \cite{MR}, or more generally
\cite{M}. 

The marginal flat coordinate can be written as 
\ben
t_\m = \frac{a'\tau + b'}{c'\tau+d'},\quad \tau\in \mathbb{H},\quad 
\begin{bmatrix}
a' & b'\\
c' & d'
\end{bmatrix}\in \operatorname{SL}_2(\C).
\een
We define 
\ben
\overline{t}_\m:= \frac{a'\overline{\tau} + b'}{c'\overline{\tau}+d'},
\een
where $\overline{\phantom{\tau} }$ is the standard conjugation in the upper
half-plane $\mathbb{H}$. 
Since the analytic continuation transforms $\tau$ and
$\overline{\tau}$ via the same fractional
linear transformation, we get that  the
analytic transformation along $\gamma$ transforms $t_m$ and
$\overline{t}_\m$ respectively into 
\ben
t_\m\mapsto \frac{a t_\m + b}{c t_\m+d}\quad \mbox{and}\quad 
\overline{t}_\m\mapsto \frac{a \overline{t}_\m + b}{c \overline{t}_\m+d}.
\een
A direct computation shows that 
\ben
-\frac{1}{t_\m-\overline{t}_\m}\mapsto 
-\frac{(c t_\m+d)^2}{t_\m-\overline{t}_\m}+ c(ct_\m+d).
\een
Following \cite{MR}, we define {\em anti-holomorphic} completion of
the ancestor potential
\ben
\widetilde{\A}_\si(\hbar;\q) = (\widetilde{X}(\si,z))\sphat \ \A_\si(\hbar;\q),
\een
where
\ben
\widetilde{X}(\si,z) = 1-\left(\frac{z}{t_\m-\overline{t}_\m}\right)\, \phi_\m\bullet_{\s=0}.
\een
Proposition \ref{ans:transf} yields the following corollary  (cf. \cite{MR}).
\begin{corollary}\label{q-mod}
The analytic continuation along $\gamma$ transforms the modified total
ancestor potential as follows. 
\ben
\widetilde{\A}_\si(\hbar;\q)\mapsto 
\widetilde{\A}_\si\left((ct_\m+d)^2\hbar;J(\gamma,\si)\q\right).
\een
\end{corollary}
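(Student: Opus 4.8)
The plan is to obtain Corollary \ref{q-mod} from Proposition \ref{ans:transf} in two moves: first passing to the holomorphic limit $'t\to 0$, and then checking that the anomalous operator $\widehat X(\gamma,t)$ produced by Proposition \ref{ans:transf} is exactly cancelled once one applies the anti-holomorphic completion operator $\widehat{\widetilde X}(\si,z)$ and lets it transform under analytic continuation. Throughout write $N:=\phi_\m\bullet_{\s=0}$ for multiplication by $\phi_\m$ in the Jacobi algebra $H$, and $Y[\mu]:=1-\mu zN$, so that $X(\gamma,t)=Y[\tfrac{c}{ct_\m+d}]$ and $\widetilde X(\si,z)=Y[\tfrac{1}{t_\m-\overline t_\m}]$. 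The first thing I would record is the elementary algebra of $N$. Since $\widehat c=\sum_i(1-2q_i)=1$ for each of the three Fermat forms, $H$ vanishes in degrees $>1$; as $\deg\phi_\m=1$ and $\deg\phi_\r>0$ for every $\r\ne\mathbf 0$, the product $\phi_\m\cdot\phi_\r$ vanishes in $H$ unless $\r=\mathbf 0$, so $N$ is the rank-one operator $\phi_{\mathbf 0}\mapsto\phi_\m$ and in particular $N^2=0$. Hence $Y[\mu_1]Y[\mu_2]=Y[\mu_1+\mu_2]$, $Y[\mu]^{-1}=Y[-\mu]$, and each $Y[\mu]$ has the form $1+z\,(\cdot)$, so (exactly as in the proof of Proposition \ref{ans:transf}) $\widehat Y[\mu]$ involves only $p^2$- and $pq$-monomials, on which the cocycle \eqref{cocycle} vanishes; thus $\mu\mapsto\widehat Y[\mu]$ is an honest representation, compatible with quantized block-diagonal changes of variables. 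Finally, by the cited analyticity of $\A_\s$ in $\s$, and since $X(\gamma,t)$ is already independent of $'t$ while $t_\m$ depends only on $\si$, passing to the limit $'t\to 0$ in Proposition \ref{ans:transf} gives: analytic continuation along $\gamma$ sends $\A_\si(\hbar;\q)$ to $\bigl(\widehat Y[\tfrac c{ct_\m+d}]\A_\si\bigr)\bigl((ct_\m+d)^2\hbar;\,J(\gamma,\si)\q\bigr)$, where $J(\gamma,\si)=\lim_{'t\to 0}J(\gamma,t)$.

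The next step is to compute $J(\gamma,\si)$ in the holomorphic limit and its interaction with $N$. Using the explicit transformation of the flat coordinates in Corollary \ref{flat-c-mon}, the normalization $\d_0=\mathbf 1$, $(\d_0,\d_\m)=1$ of Proposition \ref{flat-c}, and the fact that the tame flat coordinates (and hence their continuations) vanish at $'t=0$, one checks that in this limit $J(\gamma,\si)$ is block diagonal: the identity on the $\mathbf 0$-direction, a scalar of modular weight $2$ on the $\m$-direction, and $(ct_\m+d)^{-1}$ times an automorphism on the tame part. Combined with $N$ being the rank-one operator $\phi_{\mathbf 0}\mapsto\phi_\m$, this yields commutation relations $N\,J(\gamma,\si)=N$ and $J(\gamma,\si)\,N=(\text{weight-}2\text{ scalar})\,N$; the upshot is that conjugating $N$ by $J(\gamma,\si)$ rescales it by a modular-weight-$2$ factor. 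I would also use here that the substitution $\hbar\mapsto(ct_\m+d)^2\hbar,\ \q\mapsto J(\gamma,\si)\q$ factors as the balanced rescaling $(\hbar,\q)\mapsto((ct_\m+d)^2\hbar,(ct_\m+d)\q)$, which commutes with every quantized operator (as already invoked in Proposition \ref{ans:transf}), followed by the change of variables $\q\mapsto\tfrac1{ct_\m+d}J(\gamma,\si)\q$. Together these show that commuting a quantized $\widehat Y[\mu]$ through that substitution rescales $\mu$ by a modular weight which, combined with the $\hbar$-rescaling by $(ct_\m+d)^2$, is exactly accounted for by the conjugation of $N$ by $J(\gamma,\si)$.

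Now I would assemble the pieces. The operator $\widehat{\widetilde X}(\si,z)=\widehat Y[\tfrac1{t_\m-\overline t_\m}]$ depends on $\si$ only through the scalar multiplying $N$, so by the transformation rule $-\tfrac1{t_\m-\overline t_\m}\mapsto-\tfrac{(ct_\m+d)^2}{t_\m-\overline t_\m}+c(ct_\m+d)$ established just above the corollary, analytic continuation carries it to $\widehat Y\bigl[\tfrac{(ct_\m+d)^2}{t_\m-\overline t_\m}-c(ct_\m+d)\bigr]$. Hence continuing $\widetilde\A_\si=\widehat Y[\tfrac1{t_\m-\overline t_\m}]\A_\si$ produces $\widehat Y\bigl[\tfrac{(ct_\m+d)^2}{t_\m-\overline t_\m}-c(ct_\m+d)\bigr]$ applied to $\bigl(\widehat Y[\tfrac c{ct_\m+d}]\A_\si\bigr)\bigl((ct_\m+d)^2\hbar;\,J(\gamma,\si)\q\bigr)$. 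Commuting the outer operator through the substitution, as in the previous paragraph, turns it into $\widehat Y\bigl[\tfrac1{t_\m-\overline t_\m}-\tfrac c{ct_\m+d}\bigr]=\widehat Y\bigl[\tfrac1{t_\m-\overline t_\m}\bigr]\cdot\widehat Y\bigl[-\tfrac c{ct_\m+d}\bigr]$; here the ``$+c(ct_\m+d)$'' anomaly of $\widehat{\widetilde X}(\si,z)$ is exactly what produces the factor $\widehat Y[-\tfrac c{ct_\m+d}]$, which cancels the leftover $\widehat X(\gamma,\si)=\widehat Y[\tfrac c{ct_\m+d}]$. What remains is precisely $\bigl(\widehat{\widetilde X}(\si,z)\A_\si\bigr)\bigl((ct_\m+d)^2\hbar;\,J(\gamma,\si)\q\bigr)=\widetilde\A_\si\bigl((ct_\m+d)^2\hbar;\,J(\gamma,\si)\q\bigr)$, which is the assertion.

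The step I expect to be the main obstacle is the precise bookkeeping in the last two paragraphs: verifying that the modular weight $2$ of the scalar $1/(t_\m-\overline t_\m)$, the rescaling $\hbar\mapsto(ct_\m+d)^2\hbar$ of Proposition \ref{ans:transf}, and the conjugation of $N=\phi_\m\bullet_{\s=0}$ by the Jacobian $J(\gamma,\si)$ all line up so that the anomaly of $\widehat{\widetilde X}(\si,z)$ under analytic continuation cancels the operator $\widehat X(\gamma,\si)$ on the nose — keeping careful track of the direction of every conjugation and rescaling in Givental's quantization formalism. This is a finite linear-algebra computation, carried out exactly as in \cite{MR}; once it is in place the rest of the argument is formal.
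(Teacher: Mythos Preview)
Your proposal is correct and is exactly the computation the paper has in mind when it says ``Proposition \ref{ans:transf} yields the following corollary (cf.\ \cite{MR})''; the paper gives no further proof, so you are supplying precisely the details it defers to \cite{MR}. Your key observations --- that $N=\phi_\m\bullet_{\s=0}$ is the rank-one nilpotent $\phi_{\mathbf 0}\mapsto\phi_\m$, that $Y[\mu_1]Y[\mu_2]=Y[\mu_1+\mu_2]$, that $J(\gamma,\si)$ is block-diagonal with $J\phi_{\mathbf 0}=\phi_{\mathbf 0}$ and $J\phi_\m=(ct_\m+d)^2\phi_\m$, and that commuting $\widehat Y[\mu]$ through the substitution $(\hbar,\q)\mapsto((ct_\m+d)^2\hbar,J\q)$ replaces $\mu$ by $(ct_\m+d)^{-2}\mu$ --- are all correct and make the cancellation $\widehat Y[\tfrac{(ct_\m+d)^2}{t_\m-\overline t_\m}-c(ct_\m+d)]\rightsquigarrow\widehat Y[\tfrac1{t_\m-\overline t_\m}-\tfrac c{ct_\m+d}]=\widehat{\widetilde X}(\si,z)\,\widehat X(\gamma,\si)^{-1}$ go through on the nose; the ``main obstacle'' you flag is thus not an obstacle at all.
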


\noindent{\em Proof of Theorem \ref{t1}}. 
Assume that $\gamma\in \Gamma(W)$, then $\rho_{\neq 0}(\gamma)=J^n$
for some integer $n$, where $J$ is the classical monodromy operator. Since 
$J(\alpha_\r) = e^{2\pi\sqrt{-1} {\rm deg(\x^{\r})}}\alpha_\r$, we get that
the matrix of $J(\gamma,\si)$ is diagonal and we have
\ben
[J(\gamma,\si)]_{\r,\r} =  
(ct_\m+d)\,e^{2\pi\sqrt{-1}\, n\, {\rm deg(\x^\r)}},\quad \r\in \mathfrak{R}_{\rm tw},
\een
and 
\ben
[J(\gamma,\si)]_{\m,\m} =   (ct_\m+d)^2, \quad
[J(\gamma,\si)]_{0,0} =  1.
\een
The total ancestor potential is invariant under the rescaling
$q_k^\r\mapsto e^{2\pi\sqrt{-1}\, n\, {\rm deg(\x^\r)}} q_k^\r$, because
it is quasi-homegeneous. Hence we may assume that $n=0$. The statement
of the theorem follows from Corollary \ref{q-mod} by comparing the
coefficients of the monomials in $\q$ and $\hbar$. Each
coefficient $\widetilde{c}(t_\m)$ of the modified potential has the
form
\ben
\widetilde{c}(t_\m) = \sum_{i=0}^\infty \frac{c_i(t_\m)}{ (t_\m-\overline{t}_\m)^{i}},
\een 
where $c_i=0$ for $i\gg 0.$ The analytical continuation of
$\widetilde{c}(t_\m)$ is
\ben
\widetilde{c}\, \Big( \frac{a t_\m+b}{c t_\m+d}\Big)= (c t_\m+d)^w \, \widetilde{c}(t_\m), 
\een 
where the identity follows from Corollary \ref{q-mod} and 
$w$ is a non-negative integer depending on the monomial (see \cite{MR}
for more details).
\qed

\section{The modular groups $\Gamma(W)$}
The computation of $\Gamma(W)$ amounts to computing the monodromy
group of several hypergeometric equations of the type 
\beq\label{HG:1}
x(1-x)y''(x)+\left(\gamma-(1+\alpha+\beta)x\right)y'(x)-\alpha\beta\, y(x) =0,
\eeq
where $\alpha,\beta$, and $\gamma$ are positive rational numbers. Let
us begin by briefly reviewing the main steps in the computation.
\subsection{Monodromy of the hypergeometric equations}\label{sec:local}
There are two cases which are used in our work.
\subsubsection{The resonance case} We assume that
$\gamma=\alpha+\beta=1-1/l$, where $l$ is a positive integer. 
Near $x=0$ the hypergeometric equation \eqref{HG:1} admits the following basis of
solutions:
\beq
\left\{
\begin{aligned}
F_1^{(0)}(x) & = 
\frac{\Gamma(\alpha)\Gamma(\beta)}{\Gamma(\alpha+\beta)} \ 
\leftbase{2}{F}_1(\alpha,\beta;\gamma;x), \\
F_2^{(0)}(x) & = 
\frac{\Gamma(1-\alpha)\Gamma(1-\beta)}{\Gamma(2-\alpha-\beta)} \ 
\leftbase{2}{F}_1(1-\alpha,1-\beta;2-\gamma;x)\, x^{1-\alpha-\beta}.
\end{aligned}
\right.
\eeq
Near $x=1$ a basis of solutions is given by 
\beq\label{basis:1}
\left\{
\begin{aligned}
F_1^{(1)}(x) & = 
\leftbase{2}{F}_1(\alpha,\beta;1;1-x), \\
F_2^{(1)}(x) & = 
\leftbase{2}{F}_1(\alpha,\beta;1;1-x)\, \ln(1-x) + \sum_{n=1}^\infty b_n(1-x)^n,
\end{aligned}
\right.
\eeq
where
\ben
b_n=\frac{(\alpha)_n(\beta)_n}{(n!)^2}\,
\Big( \frac{1}{\alpha}+\cdots+\frac{1}{\alpha+n-1}+
\frac{1}{\beta}+\cdots+\frac{1}{\beta+n-1} - 
2\Big(\frac{1}{1}+\cdots+\frac{1}{n}\Big)\Big).
\een
Let us denote by $F^{(a)}(x)$ the column vector with entries $F_1^{(a)}(x)$
and $F_2^{(a)}(x)$ for $a=0,1$; then the local monodromy around $x=0$ acts as
\ben
F^{(0)}(x)\mapsto M_0^T\, F^{(0)}(x);\quad 
M_0:= 
\begin{bmatrix}
1 & 0\\
0 & e^{2\pi i(1-\alpha-\beta)}
\end{bmatrix}.
\een
It follows that the local monodromy around by $x=1$ is given by
\ben
F^{(1)}(x)\mapsto M_1^T\, F^{(1)}(x);\quad  
M_1:= 
\begin{bmatrix}
1 & 2\pi i\\
0 & 1
\end{bmatrix}.
\een
Let $\psi(z)=\Gamma'(z)/\Gamma(z)$ be the digamma function. Put
\beq\label{digamma}
K_1=2\psi(1)-\psi(\alpha)-\psi(\beta),\quad 
K_2=2\psi(1)-\psi(1-\alpha)-\psi(1-\beta).
\eeq
The key to the monodromy computation is the following lemma (see \cite{AS}).
\begin{lemma}\label{conn:matrix}
The series $F^{(0)}(x)$ and $F^{(1)}(x)$ are convergent
in the region $\{x\in \C\ :\ |x|<1, |1-x|<1\}$ and the following formula holds:
\ben
F^{(0)}(x)\mapsto C^{01} F^{(1)}(x),\quad
C^{01}:=
\begin{bmatrix}
K_1 & -1\\
K_2 & -1
\end{bmatrix}
\een
provided the branch of $F^{(1)}(x)$ near $x=1$ is chosen appropriately. 
\end{lemma}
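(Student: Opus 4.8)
The plan is to establish this as a standard connection-formula computation for the Gauss hypergeometric equation, specialized to the resonant/logarithmic case $\gamma=\alpha+\beta=1-1/l$. First I would verify convergence: the series $\leftbase{2}{F}_1(\alpha,\beta;\gamma;x)$ and $\leftbase{2}{F}_1(1-\alpha,1-\beta;2-\gamma;x)$ converge for $|x|<1$, and the series $\leftbase{2}{F}_1(\alpha,\beta;1;1-x)$ and $\sum b_n(1-x)^n$ converge for $|1-x|<1$, so on the lens $\{|x|<1,\ |1-x|<1\}$ all four solutions are genuine holomorphic (respectively holomorphic times $x^{1-\alpha-\beta}$, respectively plus a $\ln(1-x)$ term) functions, and the stated linear relation makes sense there.

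The core step is to identify the connection matrix $C^{01}$ by matching the two bases on the overlap lens. The classical tool is the standard analytic continuation formula of Gauss relating $\leftbase{2}{F}_1(\alpha,\beta;\gamma;x)$ to hypergeometric functions in the variable $1-x$; in the generic case this formula has coefficients built from $\Gamma$-functions, but here $\gamma=1$ at $x=1$ (because $\alpha+\beta=\gamma$ forces $\gamma-\alpha-\beta=0$), so the two exponents at $x=1$ coincide and the connection formula degenerates into the logarithmic case. I would take the well-known limiting form of the continuation formula (as $c-a-b\to 0$), which produces exactly the digamma-function combinations $K_1,K_2$ of \eqref{digamma}: one finds
\ben
\frac{\Gamma(\alpha)\Gamma(\beta)}{\Gamma(\alpha+\beta)}\,\leftbase{2}{F}_1(\alpha,\beta;\gamma;x)
= K_1\, F_1^{(1)}(x) - F_2^{(1)}(x),
\een
after absorbing the factor $\frac{\Gamma(\alpha)\Gamma(\beta)}{\Gamma(\alpha+\beta)}$ and the $-2\gamma_{\mathrm E}$ terms into the normalization of $F_2^{(1)}$ (this is what ``provided the branch of $F^{(1)}$ near $x=1$ is chosen appropriately'' refers to). For the second solution $F_2^{(0)}(x)=\frac{\Gamma(1-\alpha)\Gamma(1-\beta)}{\Gamma(2-\alpha-\beta)}\,\leftbase{2}{F}_1(1-\alpha,1-\beta;2-\gamma;x)\,x^{1-\alpha-\beta}$, I would apply the same continuation formula with $(\alpha,\beta,\gamma)$ replaced by $(1-\alpha,1-\beta,2-\gamma)$; since the hypergeometric equation is symmetric under this Kummer involution, $F_2^{(0)}$ satisfies the same ODE and its continuation to $x=1$ is again a combination of $F_1^{(1)}$ and $F_2^{(1)}$, this time with leading coefficient $K_2$ in front of $F_1^{(1)}$ and $-1$ in front of $F_2^{(1)}$ after the same branch normalization. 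Stacking the two scalar identities gives the asserted matrix $C^{01}=\begin{bmatrix}K_1 & -1\\ K_2 & -1\end{bmatrix}$.

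Alternatively, and perhaps more robustly, I would avoid quoting the degenerate Gauss formula verbatim and instead argue as follows: both $F_1^{(0)},F_2^{(0)}$ extend holomorphically across the lens, so on the lens each is a unique linear combination $c_{i1}F_1^{(1)}+c_{i2}F_2^{(1)}$; the coefficient $c_{i2}$ of the logarithmic solution is read off from the monodromy around $x=1$ combined with that around $x=0$ using the relation $M_\infty M_1 M_0 = I$ (the exponents at $x=\infty$ are $\alpha,\beta$), which pins down $c_{i2}=-1$ up to the common normalization, and then $c_{i1}$ is computed by evaluating the regular part at $x=1$, i.e.\ by a limit $x\to 1^-$ of the explicit series, which is precisely where the digamma values $\psi(\alpha),\psi(\beta),\psi(1-\alpha),\psi(1-\beta)$ enter through the asymptotics of $\leftbase{2}{F}_1$ near its logarithmic singularity. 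The main obstacle is purely bookkeeping: getting the branch of $\ln(1-x)$ and the overall multiplicative normalization of $F_2^{(1)}$ exactly right so that the off-diagonal and diagonal entries come out as clean $\pm 1$ with the $K_i$ in the first column rather than buried in scaling factors — this is the content of the clause about choosing the branch of $F^{(1)}$ appropriately, and it is exactly the computation carried out in \cite{AS}, which I would cite for the precise constants.
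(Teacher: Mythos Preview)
Your approach is correct and for the first row is identical to the paper's: both invoke the logarithmic continuation formula (Abramowitz--Stegun 15.3.10) directly to obtain $F_1^{(0)}=K_1F_1^{(1)}-F_2^{(1)}$.

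For the second row the paper takes a different and somewhat cleaner route than yours. Rather than reapplying the degenerate continuation formula with the Kummer-shifted parameters $(1-\alpha,1-\beta,2-\gamma)$, the paper invokes the (non-degenerate) hypergeometric identity
\ben
\frac{\Gamma(\alpha-\gamma+1)\Gamma(\beta-\gamma+1)\Gamma(\gamma-1)}{\Gamma(\alpha)\Gamma(\beta)\Gamma(1-\gamma)}\ _2F_1(\alpha-\gamma+1,\beta-\gamma+1;2-\gamma;x)\,x^{1-\gamma}
=-\ _2F_1(\alpha,\beta;\gamma;x)
+\frac{\Gamma(\alpha-\gamma+1)\Gamma(\beta-\gamma+1)}{\Gamma(1-\gamma)\Gamma(\alpha+\beta-\gamma+1)}\ _2F_1(\alpha,\beta;\alpha+\beta-\gamma+1;1-x),
\een
which in the resonant case $\gamma=\alpha+\beta$ expresses $F_2^{(0)}$ as a linear combination of $F_1^{(0)}$ and $F_1^{(1)}$ (no logarithmic piece appears). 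Substituting the already-established first row $F_1^{(0)}=K_1F_1^{(1)}-F_2^{(1)}$ and simplifying the Gamma-factors then gives $F_2^{(0)}=K_2F_1^{(1)}-F_2^{(1)}$ immediately. The advantage of this route is that one never has to match two different logarithmic solutions: in your approach, after multiplying by $x^{1-\alpha-\beta}$ and using Euler's transformation you get $x^{1-\alpha-\beta}\,{}_2F_1(1-\alpha,1-\beta;1;1-x)=F_1^{(1)}$ on the nose, but the corresponding logarithmic companion with primed parameters becomes $F_2^{(1)}$ only up to an a priori unknown multiple of $F_1^{(1)}$, and one must check this multiple vanishes (or absorb it into $K_2$). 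Your argument is still valid---the constant does come out right, as confirmed by \cite{AS}---but the paper's identity sidesteps that verification entirely.
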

\begin{proof}
The first row of $C^{01}$ directly follows from the formula 15.3.10 in \cite{AS}.
The second row follows from the first row and the hypergeometric identity
\ben
\frac{\Gamma(\alpha-\gamma+1)\Gamma(\beta-\gamma+1)\Gamma(\gamma-1)}{\Gamma(\alpha)\Gamma(\beta)\Gamma(1-\gamma)}\ _2F_1\left(\alpha-\gamma+1,\beta-\gamma+1;2-\gamma;x\right)\,x^{1-\gamma}
\\
=-\ _2F_1\left(\alpha,\beta;\gamma;x\right)
+
\frac{\Gamma(\alpha-\gamma+1)\Gamma(\beta-\gamma+1)}{\Gamma(1-\gamma)\Gamma(\alpha+\beta-\gamma+1)}\ _2F_1\left(\alpha,\beta;\alpha+\beta-\gamma+1;1-x\right).
\een
\end{proof}
The local monodromies $M_0$, $M_1$, and the connection matrix $C^{01}$
completely determine the monodromy representation. 

\subsubsection{The non-resonance case} 
Now we assume that none of the exponents
\ben
\la_0=1-\gamma,\quad \la_1=\gamma-\alpha-\beta,\quad
\la_\infty=\beta-\alpha
\een
is an integer. Then we fix the following solutions.
Near $x=0$:
\ben
\left\{
\begin{aligned}
F_1^{(0)}(x)& =\ _2F_1\left(\alpha,\beta;\gamma;x\right), \\
F_2^{(0)}(x)& =\
_2F_1\left(\alpha-\gamma+1,\beta-\gamma+1;2-\gamma;x\right)\,
x^{1-\gamma}\ .
\end{aligned}
\right.
\een
Near $x=1$:
\beq\label{eq:unity-not0}
\left\{
\begin{aligned}
F_1^{(1)}(x) & =  \
_2F_1\left(\alpha,\beta;\alpha+\beta-\gamma+1;1-x\right), \\
F_2^{(1)}(x) & = 
\
_2F_1\left(\gamma-\alpha,\gamma-\beta;\gamma-\alpha-\beta+1;1-x\right)\,
(1-x)^{\gamma-\alpha-\beta}.
\end{aligned}
\right.
\eeq
Let us denote by $F^{(a)}(x)(a=0,1)$ the corresponding column vectors. Then the key fact is the following. Just like in the previous
case, there is a common contractible domain where both $F^{(0)}$ and
$F^{(1)}(x)$ are convergent and hence one can define a connection matrix
$D^{01}$, such that $F^{(0)}(x)\mapsto D^{01}\, F^{(1)}(x)$. 
\begin{lemma}\label{ac:non-res}
The connection matrices are given by the following formulas:
\beq\label{connect-twist}
D^{01}=
\left[\ 
\begin{aligned}
&\frac{\Gamma(\gamma)\Gamma(\gamma-\alpha-\beta)}{\Gamma(\gamma-\alpha)\Gamma(\gamma-\beta)}
& &
\frac{\Gamma(\gamma)\Gamma(\alpha+\beta-\gamma)}{\Gamma(\alpha)\Gamma(\beta)}
\\
&
\frac{\Gamma(2-\gamma)\Gamma(\gamma-\alpha-\beta)}{\Gamma(1-\alpha)\Gamma(1-\beta)}
& &
\frac{\Gamma(2-\gamma)\Gamma(\alpha+\beta-\gamma)}{\Gamma(1+\alpha-\gamma)\Gamma(1+\beta-\gamma)}
\end{aligned}\ 
\right]\, .
\eeq
\end{lemma}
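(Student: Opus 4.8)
The plan is to derive both connection matrices from the classical Barnes–Mellin connection formula for the hypergeometric function, exactly as in the resonance case but now without any confluence of exponents. First I would recall the standard formula (Abramowitz–Stegun 15.3.6, or Erd\'elyi) expressing $\leftbase{2}{F}_1(\alpha,\beta;\gamma;x)$ as a linear combination of $\leftbase{2}{F}_1(\alpha,\beta;\alpha+\beta-\gamma+1;1-x)$ and $(1-x)^{\gamma-\alpha-\beta}\,\leftbase{2}{F}_1(\gamma-\alpha,\gamma-\beta;\gamma-\alpha-\beta+1;1-x)$, valid precisely when $\gamma-\alpha-\beta=\lambda_1\notin\Z$. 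Reading off the two $\Gamma$-factor coefficients gives the first row of $D^{01}$. This is just bookkeeping: match the solution $F_1^{(0)}(x)=\leftbase{2}{F}_1(\alpha,\beta;\gamma;x)$ against the chosen basis $F^{(1)}(x)$ in \eqref{eq:unity-not0}.

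Next I would obtain the second row. The cleanest route is to observe that $F_2^{(0)}(x)=x^{1-\gamma}\,\leftbase{2}{F}_1(\alpha-\gamma+1,\beta-\gamma+1;2-\gamma;x)$ is itself a hypergeometric solution with parameters $(\alpha',\beta',\gamma')=(\alpha-\gamma+1,\beta-\gamma+1,2-\gamma)$, up to the prefactor $x^{1-\gamma}$, and that under this substitution the relevant combination $\gamma'-\alpha'-\beta'=\gamma-\alpha-\beta$ is unchanged, so the non-resonance hypothesis still applies. Applying the same Barnes connection formula to these primed parameters, and then tracking how the prefactor $x^{1-\gamma}$ interacts with the $(1-x)$-expansions at $x=1$, produces the second row of $D^{01}$; here one uses $\Gamma(\gamma')=\Gamma(2-\gamma)$, $\Gamma(\gamma'-\alpha')=\Gamma(1-\alpha)$, $\Gamma(\gamma'-\beta')=\Gamma(1-\beta)$, $\Gamma(\alpha')=\Gamma(1+\alpha-\gamma)$, $\Gamma(\beta')=\Gamma(1+\beta-\gamma)$, and $\Gamma(\alpha'+\beta'-\gamma')=\Gamma(\alpha+\beta-\gamma)$, which reproduces exactly the entries displayed in \eqref{connect-twist}. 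Alternatively one could invoke the symmetry $x\mapsto 1-x$ together with the known local exponents, but the direct substitution is more transparent.

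The only genuinely delicate point is the choice of branches: the powers $x^{1-\gamma}$ near $x=0$ and $(1-x)^{\gamma-\alpha-\beta}$ near $x=1$ are multivalued, so the identity $F^{(0)}(x)=D^{01}F^{(1)}(x)$ only holds after fixing a common simply connected domain — say $\{x:|x|<1,\ |1-x|<1,\ x\notin(-\infty,0]\cup[1,\infty)\}$ — and a consistent determination of the two fractional powers there, matching the principal branches used to define the respective bases. I expect this branch-tracking to be the main obstacle, in the sense that it is where sign and phase errors creep in; once the branches are pinned down the computation is a mechanical application of the Barnes formula and the reflection/duplication properties of $\Gamma$. No convergence issue arises beyond what is already noted, since both $\leftbase{2}{F}_1$ series have radius of convergence $1$ about their respective base points and the stated domain lies inside both discs.
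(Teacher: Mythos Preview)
Your proposal is correct and is the standard classical derivation; the paper itself gives no proof of this lemma, treating it as a well-known connection formula (the same Abramowitz--Stegun reference you cite). One small point you leave implicit: after applying the Barnes formula with primed parameters $(\alpha',\beta',\gamma')$, the resulting $\leftbase{2}{F}_1$ functions in $1-x$ carry the primed parameters, and identifying them with $F_1^{(1)},F_2^{(1)}$ (which use the unprimed parameters) requires an Euler transformation $\leftbase{2}{F}_1(a,b;c;z)=(1-z)^{c-a-b}\leftbase{2}{F}_1(c-a,c-b;c;z)$, which produces exactly the factor $x^{\gamma-1}$ that cancels the prefactor $x^{1-\gamma}$---this is presumably what you mean by ``tracking how the prefactor interacts,'' but it is worth making explicit.
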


\subsection{The group $\widetilde{\Gamma}(W)$}\label{PF:mon}
Recall that $\widetilde{\Gamma}(W)=\operatorname{Im}(\rho_0)$, where
\ben
\rho_0:\pi_1(\Si)\to \operatorname{GL}(\lieh_0),\quad \lieh_0=H_2(X_{0,1};\C)^J\cong H_1(E_0;\C),
\een 
is the $J$-invariant part of the monodromy representation. Given a
basis $\{A,B\}$ of $\lieh_0$, we can compute the monodromy action on
that basis by computing the monodromy under analytic continuation of
the corresponding period integrals $\pi_A(\si)$ and $\pi_B(\si)$ (see
Section \ref{sec:pf}). Let $D=\si \d_\si$, then both $\pi_A(\si)$ and $\pi_B(\si)$ satisfy a second order differential
equation
\beq\label{PF:1}
D(D-1)\pi(\si)-C\, \sigma^l(D+l\alpha)(D+l\beta)\pi(\si)=0,
\eeq
where $C$ is some constant, $l\in\mathbb{Z}$, $\alpha,\beta\in\mathbb{Q}$ and $\alpha+\beta=1-\frac{1}{l}.$ 

Using the substitution $x=C\,\sigma^l$, the differential equation \eqref{PF:1} becomes the standard hypergeometric equation
\eqref{HG:1} (with $\gamma=\alpha+\beta$), so the monodromy can be
computed as explained above. We
fix a reference point on $\C-\{0,1,\infty\}$ near $x=1$ and a basis of
solutions as \eqref{basis:1}. Denote by $M_0^{HG}$ and $M_1^{HG}$ the monodromy transformations of the column vector $F^{(1)}(x)$ corresponding to paths
going around $x=0$ and $x=1$, i.e. $F^{(1)}(x)\mapsto M_a^{HG}\,F^{(1)}(x)$, $a=0,1$. We may choose the paths that 
\beq\label{HG-la0}
M_1^{HG}=M_1^T; \quad M_0^{HG}= (C^{01})^{-1}\,M_0^T\,C^{01},
\eeq
where $M_0,M_1$, and $C^{01}$ are respectively the local monodromies
near $x=0,1$ and the matrix giving the analytic continuation from
$x=1$ to $x=0$ (see Lemma \ref{conn:matrix}). 

Our substitution $x=C\, \sigma^l$ is a covering $\Si-\{0\}\to \C-\{0,1,\infty\}$ of
degree $l$. Let us denote by $M_i^{PF},$ $1\leq
i\leq l$, the monodromy transformations corresponding to loops going
around $\si=p_i,$ which are the singularities of the differential equation \eqref{PF:1}, 
\beq\label{sing-pts}
p_i=C^{-1/l}\eta^i, \quad \eta=\exp(2\pi\sqrt{-1}/l).
\eeq
Since $\Sigma=\C\backslash \{p_1,\dots,p_l\},$ 
$\widetilde{\Gamma}(W)$ is generated by $M_i^{PF},$ $1\leq i\leq l.$
Lifting the reference point and choosing the 
loops appropriately, we can arrange that 
\beq\label{PF-la=0}
M_i^{PF} = (M_0^{HG})^{i-1}\, M_1^{HG}\,
(M_0^{HG})^{1-i},\quad 1\leq i\leq l.
\eeq 
It remains only to explain how to find a basis of solutions
$\{F_1^{\rm GW},F_2^{\rm GW}\}$ that corresponds to periods of the
elliptic curve, i.e., 
\ben
F_1^{\rm GW}(\si)=\pi_A(\si),\quad F_2^{\rm GW}(\si)=\pi_B(\si),
\een
where $\{A,B\}$ is an integral basis of $H_1(E_0;\Z)$, s.t. $A\circ B=1.$ The
$j$-invariant of $E_\si$ has the form
\beq\label{j-inv}
j(\si) = \frac{P(\si)}{(1-C\, \si^l)^N},\quad P(\si)\in \C[\si],
\eeq
where the zeroes of the polynomial in the denominator are precisely
the singular points in \eqref{sing-pts}. 
Since we are interested in the matrices of the monodromy transformations, we have the freedom to rescale the
above basis by any non-zero constant, so we may assume that 
\beq\label{GW-la0}
\begin{bmatrix}
F_1^{GW}\\
F_2^{GW}
\end{bmatrix}
= 
K\,
\begin{bmatrix}
F_1^{(1)}\\
F_2^{(1)}
\end{bmatrix},\quad
K=\begin{bmatrix}
1  &   0\\
-\frac{a}{b} &  \frac{1}{b}
\end{bmatrix},
\eeq  
where $F_1^{(1)}, F_2^{(2)}$ are viewed as multi-valued functions of $\sigma$ via the substitution $x=C\,\sigma^l$, 
and $a$ and $b$ are some non-zero constants.
\begin{lemma}\label{sol:GW}
The solutions $F_i^{\rm GW}$, $i=1,2,$ correspond to periods of the elliptic curve (normalized as above) if and
only if
\ben
a=\frac{1}{N}\, (\ln P(p_1)+2\pi\sqrt{-1} m),\quad b=2\pi\sqrt{-1}/N,
\een 
where $p_1=C^{-1/l}$ and $m$ is some integer. 
\end{lemma}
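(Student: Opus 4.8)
\medskip
\noindent\textit{Proof plan.}
The idea is to pin down $a$ and $b$ by comparing, near the singular point $\si=p_1$, the two available descriptions of the family of elliptic curves $E_\si$: on the one hand a normalized integral period basis recovers $E_\si$ as $\C/(\Z\,\pi_A+\Z\,\pi_B)$, so its $j$-invariant equals the modular function evaluated at the period ratio; on the other hand $j(E_\si)$ is the explicit rational function \eqref{j-inv}. Matching the two at the cusp $\si=p_1$ will fix $b$ exactly and $a$ up to the integer $m$.

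First I would record the local shape of the candidate solutions. Since $\gamma=\alpha+\beta=1-1/l$ we are in the resonance case, so near $x=1$, i.e.\ near $\si=p_1$ under the substitution $x=C\si^l$, the basis \eqref{basis:1} gives
\[
\frac{F_2^{(1)}(x)}{F_1^{(1)}(x)}=\ln(1-x)+\frac{\sum_{n\geq1}b_n(1-x)^n}{F_1^{(1)}(x)}=\ln(1-C\si^l)+g_0(C\si^l),
\]
where $g_0$ is holomorphic near $0$ with $g_0(0)=0$ (here $F_1^{(1)}(1)=1\neq0$). Hence, writing $\tau:=F_2^{\rm GW}/F_1^{\rm GW}$ and using \eqref{GW-la0},
\[
\tau=\frac1b\Bigl(\frac{F_2^{(1)}}{F_1^{(1)}}-a\Bigr)=\frac1b\ln(1-C\si^l)-\frac ab+g(C\si^l),
\]
with $g$ holomorphic near $0$ and $g(0)=0$; consequently, setting $q:=e^{2\pi\sqrt{-1}\,\tau}$,
\[
q=(1-C\si^l)^{\,2\pi\sqrt{-1}/b}\;e^{-2\pi\sqrt{-1}\,a/b}\;\bigl(1+O(1-C\si^l)\bigr)\qquad(\si\to p_1).
\]

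Next I would use the geometric input. If $F_1^{\rm GW},F_2^{\rm GW}$ are periods of an integral symplectic basis $\{A,B\}$ with $A\circ B=1$, then $E_\si\cong\C/(\Z+\Z\tau)$ with $\tau\in\mathbb{H}$, so $j(E_\si)=j(\tau)$ for the standard modular function $j(\tau)=q^{-1}+O(1)$ as $q\to0$; together with $j(E_\si)\to\infty$ at $p_1$ this forces $\operatorname{Im}\tau\to\infty$, hence $q\to0$ and $q=j(E_\si)^{-1}\bigl(1+O(j(E_\si)^{-1})\bigr)$. Since, by \eqref{j-inv}, the zeros of the denominator are exactly the singular points — so the degeneration at $p_1$ is multiplicative, of Kodaira type $I_N$, and $P(p_1)\neq0$ — we have $j(E_\si)^{-1}=(1-C\si^l)^N/P(\si)$, and thus
\[
q=\frac{(1-C\si^l)^N}{P(p_1)}\,\bigl(1+O(1-C\si^l)\bigr)\qquad(\si\to p_1).
\]
Both expressions for $q$ have the form $(1-C\si^l)^{s}\cdot(\text{holomorphic, nonzero at }p_1)$, so the exponents must agree, $2\pi\sqrt{-1}/b=N$, giving $b=2\pi\sqrt{-1}/N$; and then the leading coefficients must agree, $e^{-2\pi\sqrt{-1}\,a/b}=1/P(p_1)$, i.e.\ $e^{-aN}=1/P(p_1)$, i.e.\ $aN=\ln P(p_1)+2\pi\sqrt{-1}\,m$ for some $m\in\Z$. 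This is the ``only if'' direction. For ``if'' I would reverse the computation, using the following bookkeeping: $F_1^{(1)}$ is, up to a scalar, the unique local solution at $p_1$ fixed by the monodromy (the one that stays bounded), hence the period of the monodromy-invariant cycle $\delta$; the normalized integral symplectic bases with $A=\delta$ form a $\Z$-torsor $\{(\delta,\,B_0+n\delta)\}_{n\in\Z}$, and replacing $B_0$ by $B_0+n\delta$ changes $F_2^{\rm GW}$ by $nF_1^{(1)}$, i.e.\ shifts $a$ by $-nb=-2\pi\sqrt{-1}\,n/N$. Since the genuine basis $(\delta,B_0)$ already has (by the ``only if'' part) $b=2\pi\sqrt{-1}/N$ and $a\in\tfrac1N(\ln P(p_1)+2\pi\sqrt{-1}\,\Z)$, every pair $(a,b)$ of the stated form is realized by such a basis, and only such pairs occur.

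The step I expect to be the main obstacle is making the geometric identification $j(E_\si)=j(\tau)$ fully precise with the correct normalization — equivalently, that the rational function in \eqref{j-inv} has a pole of order exactly $N$ at $p_1$ and that the leading coefficient of $1/j$ there is $1/P(p_1)$, matched against $j(\tau)=q^{-1}+O(1)$ — together with the branch bookkeeping that promotes the identity $e^{-aN}=1/P(p_1)$ to the stated ``if and only if'' carrying the integer $m$. The remaining ingredients, namely the elementary asymptotics of \eqref{basis:1} and the description of normalized integral symplectic bases fixing the invariant cycle $\delta$, are routine.
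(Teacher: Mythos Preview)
Your proposal is correct and follows essentially the same approach as the paper: both arguments express $b\tau+a=F_2^{(1)}/F_1^{(1)}=\ln(1-C\si^l)+O(1-C\si^l)$ near $\si=p_1$, then match the leading behavior of $j(\si)=P(\si)/(1-C\si^l)^N$ against the $q$-expansion $j(\tau)=q^{-1}+\cdots$ to read off $Nb=2\pi\sqrt{-1}$ and $e^{Na}=P(p_1)$. Your write-up is in fact more thorough than the paper's, which leaves the ``if'' direction and the bookkeeping of the integer $m$ implicit.
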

\proof
Since 
$\tau:=F_2^{GW}/F_1^{GW}$
is the modulus of the elliptic curve, we
must choose $a$ and $b$ in such a way that $j(\si)=1/q+\cdots,\
q=e^{2\pi i\tau}.$ Inverting the relation \eqref{j-inv} near $\si=p_1$,
  we find:
\ben
p_1-\si = \frac{e^a}{C\prod_{j=2}^l (p_1-p_j)}\, e^{b\tau}\Big(1+O(e^{b\tau})\,\Big).
\een
Then the lemma follows from
\ben
j(\si) = \frac{P(p_1)}{e^{N(b\tau+a)}}+\cdots\quad
\qed
\een
Let us point out that the value of $a$ is fixed only up to $2m\pi\sqrt{-1}/N$ for some $m\in\Z$.  This corresponds to the fact that while there is a unique choice of an invariant
cycle $A\in H_1(E_\si;\Z)$ near $\si=p_1$, so that  up to a constant  $\pi_A(\si)$ agrees
with  $F_1^{GW}$, for the second cycle $B\in H_1(E_\si;\Z)$, we have
the freedom to add any integer multiple of $A$.
According to \eqref{HG-la0}, \eqref{PF-la=0}, and \eqref{GW-la0} we have
\begin{lemma}
Let $F^{GW}$ be the column vector with entries $F^{GW}_1$, $F^{GW}_2$. The monodromy transformation along the fixed loop going around $\sigma=p_i$ acts on $F^{GW}$ by
\beq\label{modular-inv}
F^{GW}\mapsto M_{i,0}^T\, F^{GW}; \quad M_{i,0}^{T}=K\,M_i^{PF}\,K^{-1}.
\eeq
As a consequnce, the group $\widetilde{\Gamma}(W)$ is isomorphic to the subgroup of $\operatorname{SL}_2(\Z)$  
generated by $M_{i,0}$, $i=1,\dots, l.$
\end{lemma}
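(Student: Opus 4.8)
The plan is to obtain the transformation rule for $F^{GW}$ by pushing the already‑established monodromy of the hypergeometric solution vector $F^{(1)}$ through the substitution $x=C\,\sigma^l$ and the normalization $K$, and then to read off the ``as a consequence'' clause from the interpretation of $F^{GW}$ as a vector of period integrals. The first part is essentially formal; the genuine content is the observation that the resulting matrices, which a priori are assembled from transcendental ingredients, actually land in $\operatorname{SL}_2(\Z)$.

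First I would record how $F^{(1)}$ behaves under analytic continuation in $\sigma$. Viewed through $x=C\,\sigma^l$, the entries of $F^{(1)}$ are multivalued solutions of \eqref{PF:1} on $\Sigma$, and by construction the fixed loop around $\sigma=p_i$ underlying \eqref{PF-la=0}, which I will call $\gamma_i$, acts by $F^{(1)}\mapsto M_i^{PF}F^{(1)}$ with $M_i^{PF}=(M_0^{HG})^{i-1}M_1^{HG}(M_0^{HG})^{1-i}$ and $M_0^{HG},M_1^{HG}$ as in \eqref{HG-la0}; here one uses that $x=C\,\sigma^l$ is unramified over $x=1$ and that the conjugating powers of $M_0^{HG}$ account for transporting the base point from near $p_1$ to near $p_i$. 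By \eqref{GW-la0} together with Lemma \ref{sol:GW} we have $F^{GW}=K\,F^{(1)}$ with $K$ the stated constant invertible matrix, so continuation along $\gamma_i$ sends $F^{GW}=K\,F^{(1)}\mapsto K\,M_i^{PF}\,F^{(1)}=(K\,M_i^{PF}\,K^{-1})\,F^{GW}$, which is the asserted formula with $M_{i,0}^T:=K\,M_i^{PF}\,K^{-1}$.

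For the consequence, the decisive input is again Lemma \ref{sol:GW}: for the indicated values of $a$ and $b$ the functions $F^{GW}_1,F^{GW}_2$ are the periods $\pi_A(\sigma),\pi_B(\sigma)$ attached to an integral symplectic basis $\{A,B\}$ of $H_1(E_0;\Z)$ with $A\circ B=1$. Since, under the Gauss--Manin connection, a vector of period integrals transforms by the transpose of the matrix by which the corresponding cycles transform, the matrix $M_{i,0}$ is exactly the matrix of $\rho_0(\gamma_i)$ in the basis $\{A,B\}$. Because the monodromy of the elliptic pencil $E_\sigma$ preserves the lattice $H_1(E_0;\Z)$ and its intersection form, we get $M_{i,0}\in\operatorname{SL}_2(\Z)$; I would stress that this integrality is invisible in the individual factors $K$ and $M_i^{PF}$, which involve $\Gamma$‑values and the digamma constants $K_1,K_2$ of Lemma \ref{conn:matrix}, and is forced only by the geometry. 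Finally, $\pi_1(\Sigma)=\pi_1(\C\setminus\{p_1,\dots,p_l\})$ is freely generated by the loops around the $p_i$, so $\widetilde\Gamma(W)=\operatorname{Im}(\rho_0)$ is generated by $\rho_0(\gamma_1),\dots,\rho_0(\gamma_l)$; fixing the basis $\{A,B\}$ identifies $\operatorname{GL}(\lieh_0)$ with $\operatorname{GL}_2(\C)$ and thereby $\widetilde\Gamma(W)$ with the subgroup of $\operatorname{SL}_2(\Z)$ generated by $M_{1,0},\dots,M_{l,0}$.

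The step requiring the most care is the transpose/orientation bookkeeping: one must ensure that $M_{i,0}$, rather than $M_{i,0}^{-1}$ or $M_{i,0}^{-T}$, is the matrix of $\rho_0(\gamma_i)$, which means carefully tracking the contragredience between the action on cycles and on period integrals through both the change of variables $x=C\,\sigma^l$ and the normalization $K$, and matching the orientations of the chosen loops $\gamma_i$ with those used to define $M_i^{PF}$ in \eqref{PF-la=0}. For the present statement, where only an isomorphism onto a subgroup of $\operatorname{SL}_2(\Z)$ is claimed, any residual ambiguity is harmless; it becomes essential only in the next section, where this subgroup is pinned down to be $\Gamma(3)$, $\Gamma(4)$, or $\Gamma(6)$.
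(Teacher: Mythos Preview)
Your proof is correct and follows the same approach as the paper, which in fact does not give a separate proof but simply states that the lemma follows from \eqref{HG-la0}, \eqref{PF-la=0}, and \eqref{GW-la0}. You have spelled out exactly this chain: the conjugation $F^{GW}=K\,F^{(1)}\mapsto K\,M_i^{PF}\,K^{-1}\,F^{GW}$ gives the transformation rule, and the identification of $F^{GW}$ with periods over an integral symplectic basis (Lemma \ref{sol:GW}) forces $M_{i,0}\in\operatorname{SL}_2(\Z)$ and shows $\widetilde\Gamma(W)=\langle M_{1,0},\dots,M_{l,0}\rangle$.
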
 
Now we are in a position to prove Theorem \ref{t2}. Some of the computations are quite cumbersome, so
it is better to use some computer softawre, e.g. Mathematica or Maple. 
\subsection{The Fermat $E_6^{(1,1)}$ case}
The polynomial in this case is 
\ben
f(\si,\x) = x_1^3+x_2^3+x_3^3+\si\,x_1x_2x_3.
\een
Its modular group is well known. For the sake of completeness, let us recall the computation.
The $j$-invariant of this family is 
\ben
j(\si) = -\frac{\si^3(\si^3-216)^3}{(\si^3+27)^3}.
\een
The weights of the equation \eqref{PF:1} are $\alpha=\beta=1/3$,
$\gamma=2/3.$
According to \eqref{modular-inv}, $\widetilde{\Gamma}(W)=\Gamma(3)$ since for three singular points $p_i=-3e^{2\pi\sqrt{-1}i/3}, i=1,2,3$, we have
\ben
M_{1,0}=\begin{bmatrix} 1 & 3\\ 0 & 1\end{bmatrix},\quad
M_{2,0}=\begin{bmatrix} 1-3m & 3m^2\\ -3 & 1+3m\end{bmatrix},\quad
M_{3,0}=\begin{bmatrix} 4-3m & 3(m-1)^2\\ -3 & 3m-2\end{bmatrix},
\een
where the choice of $m$ depends on the coice of a symplectic basis
$\{A,B\}$ in $H_1(E_\si;\Z)$. 
For the period integrals $\Phi_{\r}(\sigma)$, $\r\in\mathfrak{R}_{\rm tw},$ the monodromy acts on the corresponding flat sections by multiplication of $e^{2\pi\sqrt{-1}\deg\phi_{\r}}$, which is just $J$ or $J^2$, with $J$ the classical monodromy operator. 
Thus $\Gamma(W)=\widetilde{\Gamma}(W)=\Gamma(3)$.

\subsection{The Fermat $E_7^{(1,1)}$ case}
The polynomial in this case is 
\ben
f(\si,\x) = x_1^4+x_2^4+x_3^2+\si\,x_1^2x_2^2.
\een
The $j$-invariant is 
\ben
j(\si) = 16\,\frac{(\si^2+12)^3}{(4-\si^2)^2}.
\een
Thus $C=1/4$, $l=2$ and $P(\si)=(\sigma^2+12)^3.$
\subsubsection{Monodromy of the invariant part}
The parameters of the equation \eqref{PF:1} are $\alpha=\beta=1/4$,
$\gamma=1/2$. 
We consider two singular points $p_1=2$ and $p_2=-2$.
According to \eqref{modular-inv},  the monodromy group is generated by the following two
matrices:
\ben
M_{1,0}=\begin{bmatrix} 1 & 2\\ 0 & 1\end{bmatrix},\quad
M_{2,0}=\begin{bmatrix} 1-2m & 2m^2\\ -2 & 1+2m\end{bmatrix}\, ,
\een
where the choice of $m$ depends on the coice of a symplectic basis
$\{A,B\}$ in $H_1(E_\si;\Z)$. 
We choose the basis to be such that
$m=0$; then the monodromy group $\widetilde{\Gamma}(W)$ is the group of all matrices 
\ben
g=\begin{bmatrix} a & b\\ c & d\end{bmatrix}\in {\rm  SL}_2(\Z), \quad
a\equiv d\equiv1\ ({\rm mod}\, 4),\quad
b\equiv c\equiv0\ ({\rm mod}\, 2). 
\een
This is an index 2 subgroup of $\Gamma(2)$. 

\subsubsection{Monodromy of the twisted sector}
We fix the following basis of the twisted sectors in the Jacobian algebra $\mathscr{Q}_W$:
\ben
\phi_{\r}(\x)=x_1^{r_1}x_2^{r_2}x_3^{r_3},\quad \r=\{(100),(010),(200),(110),(020),(210),(120)\}.
\een
According to \cite{MS}, the corresponding period integrals
$\Phi_\r(\si)$ satisfy
\beq\label{de:1st-order}
(4-\si^2) \d_\si\,\Phi_\r(\si) =2\, {\rm deg}(\phi_\r)\, \si\
\Phi_\r(\si),\quad \r\neq (200),(020), 
\eeq
and
\beq
\left\{
\begin{aligned}\label{de:2nd-order}
(4-\si^2)\, \d_\si\Phi_{200}(\si) & =  \frac{\si}{2}\,
\Phi_{200}(\si)-\Phi_{020}(\si)\\
(4-\si^2)\, \d_\si\Phi_{020}(\si) & =  -\Phi_{200}(\si)+\frac{\si}{2}\,
\Phi_{020}(\si)
\end{aligned}
\right.
\eeq
The monodromy of the equations \eqref{de:1st-order} around $\si=p_i$ is straightforward
to compute:
\ben
\Phi_\r\mapsto \ e^{2\pi\sqrt{-1}\,  {\rm deg}(\phi_\r)}  \Phi_\r\, .
\quad i=1,2.
\een
For the system \eqref{de:2nd-order}, we can first obtain a second order
differential equation for $\Phi_{200}$, which after the substitution
$x=\si^2/4$ becomes the hypergeometric equation with weights $\alpha=3/4,\beta=1/4,\gamma=1/2.$ It follows that the system can be
solved as follows: 
\beq\label{E7:fund-solution}
\begin{bmatrix} \Phi_{200} \\ \Phi_{020}\end{bmatrix} = 
\begin{bmatrix} 
   F_1^{(1)}  &    F_2^{(1)}\\
L F_1^{(1)}  &  L F_2^{(1)}
\end{bmatrix}
\,  \begin{bmatrix} A_{100} \\ A_{020}\end{bmatrix} 
\eeq
where $A_{200}$ and $A_{020}$ are flat sections of the vanishing cohomology
bundle and 
\ben
L=-(4-\si^2)\d_\si +\si/2.
\een
Let us denote by $M_{i, 1/4}$ the $2\times 2$ matrix defined by the analytic continuation along 
a simple loop $\gamma_i$ around the point $\si=p_i$  of the following vector-valued function:
\ben
F^{(1)}\mapsto M^T_{i,1/4}\, F^{(1)},\quad F^{(1)}=(F_1^{(1)},F_2^{(1)})^T,
\een
where the reason to use the transposition operation ${ }^T$ will become clear shortly.  
Since the LHS of \eqref{E7:fund-solution} consists of holomorphic sections of the
vanishing cohomology bundle, it must be invariant under analytic
continuation along any loop in $\Si$. It follows that the monodromy of the flat
sections is
\ben
\begin{bmatrix} A_{100} \\ A_{020}\end{bmatrix}  \mapsto
\left(M_{i,1/4}\right)^{-1}\, 
\begin{bmatrix} A_{100} \\ A_{020}\end{bmatrix} .
\een
Note that in the basis $\{\alpha_{100},\alpha_{020}\}$ dual to $\{A_{100},A_{020}\}$ the matrix of the 
monodromy transformation $\rho(\gamma_i)$ is precisely $M_{i,1/4}$. 
On the other hand, arguing as in Section \ref{PF:mon}, we get
\ben
M_{i,1/4} = \Big( (M_0^{HG})^{i-1}\, M_1^{HG} \, (M_0^{HG})^{(1-i)}\Big)^T,\quad i=1,2,
\een
where $M_0^{HG}$ and $M_1^{HG}$ are the monodromies of the corresponding
hypergeometric equation around $x=0$ and $x=1$ respectively. Let us
denote by 
\ben
D_0=
\begin{bmatrix} 1 & 0 \\ 0 & e^{2\pi\sqrt{-1}(1-\gamma)}\end{bmatrix},\quad 
D_1 = 
\begin{bmatrix} 1 & 0 \\0 &  e^{2\pi\sqrt{-1}(\gamma-\alpha-\beta)}\end{bmatrix}
\een
the local monodromies of the hypergeometric equation; then (see Lemma \ref{ac:non-res})
\ben
M_1^{HG}=D_1^T; \quad M_0^{HG}=(C^{01})^{-1}\, D_0^T\, C^{01}.
\een
A straightforward computation yields
\ben
M_{1,1/4} = 
\begin{bmatrix} 1 & 0 \\ 0 & -1\end{bmatrix} ,\quad
M_{2,1/4} = 
\begin{bmatrix} -1 & 0 \\ 0 & 1\end{bmatrix}\, .
\een
The group generated by these two matrices is $\Z/2\times \Z/2$. Let us
point out that this agrees with the so-called Schwarz list (see
\cite{Ma}), since the exponents of the hypergeometric equation are
\ben
\la_0=1-\gamma=\frac{1}{2},\quad \la_1=\gamma-\alpha-\beta =
-\frac{1}{2},\quad
\lambda_\infty = \beta-\alpha = -\frac{1}{2}.
\een
\begin{lemma}\label{mod-group:x9}
The kernel ${\rm Ker}\ (\rho_0)=0$ and the modular group $\Gamma(W)=\Gamma(4).$
\end{lemma}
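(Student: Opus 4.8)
The plan is to turn the explicit monodromy data already at hand — the matrices $M_{i,0}$ on $\lieh_0$, the diagonal matrices $(M_{i,1/4})^{-1}$ on $\mathbb{C}A_{200}\oplus\mathbb{C}A_{020}$, and the scalar monodromies $\Phi_\r\mapsto e^{2\pi\sqrt{-1}\,{\rm deg}(\phi_\r)}\Phi_\r$ of \eqref{de:1st-order} — into a description of $\rho_{\neq 0}$ on all of $\lieh_{\neq 0}$, and then to extract $\Gamma(W)={\rm Ker}(\rho_W)$ by elementary group theory. First, since $\Sigma=\mathbb{C}\setminus\{p_1,p_2\}$, the group $\pi_1(\Sigma)$ is free of rank $2$ on the loops $\gamma_1,\gamma_2$ around $p_1,p_2$; and the computation above presents $\widetilde{\Gamma}(W)={\rm Im}(\rho_0)$ as the index-$2$ subgroup of $\Gamma(2)$ of matrices with upper-left entry $\equiv1\pmod4$, which — since $\Gamma(2)\cong\{\pm I\}\times F_2$ and this subgroup avoids $-I$ — is itself free of rank $2$. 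A surjective homomorphism between finitely generated free groups of equal rank is an isomorphism, so $\rho_0$ is an isomorphism; in particular ${\rm Ker}(\rho_0)=0$, the inclusion \eqref{kernels} holds vacuously, $\rho_W=\overline{\rho}_{\neq 0}\circ\rho_0^{-1}$ is defined on all of $\widetilde{\Gamma}(W)$, and $\Gamma(W)$ equals the $\rho_0$-image of $\{\gamma\in\pi_1(\Sigma):\rho_{\neq 0}(\gamma)\in\langle J\rangle\}$.

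For the second step I would analyse $\rho_{\neq 0}$. Since it commutes with $J$ it preserves the $J$-eigenspace decomposition $\lieh_{\neq 0}=V_{1/4}\oplus V_{1/2}\oplus V_{3/4}$, spanned by the $A_\r$ with ${\rm deg}(\phi_\r)=\tfrac14,\tfrac12,\tfrac34$ (dimensions $2,3,2$). By \eqref{de:1st-order}, each $\gamma_i$ transports every $A_\r$ with $\r\in\mathfrak{R}_{\rm tw}\setminus\{200,020\}$ by exactly the eigenvalue with which $J$ acts on it, hence acts on $V_{1/4}$, on $V_{3/4}$, and on the line $\mathbb{C}A_{110}$ just as $J$; on the plane $\mathbb{C}A_{200}\oplus\mathbb{C}A_{020}$ inside $V_{1/2}$ it acts by $(M_{i,1/4})^{-1}$, which is $J$ restricted to that plane composed with the involution ${\rm diag}(-1,1)$ (for $i=1$) or ${\rm diag}(1,-1)$ (for $i=2$). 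Hence $\rho_{\neq 0}(\gamma_i)=J\cdot E_i$, where $E_1,E_2$ are commuting involutions, the identity on $V_{1/4}\oplus V_{3/4}$, and on $V_{1/2}$ in the basis $A_{200},A_{110},A_{020}$ equal to ${\rm diag}(-1,1,1)$ and ${\rm diag}(1,1,-1)$. Because $J$ commutes with every operator preserving this decomposition, $\rho_{\neq 0}(\gamma)=J^{\,w_1+w_2}E_1^{\,w_1}E_2^{\,w_2}$, where $w_i=w_i(\gamma)$ is the winding number of $\gamma$ about $p_i$; this lies in $\langle J\rangle$ exactly when $E_1^{\,w_1}E_2^{\,w_2}$ does, and since $E_1^{\,w_1}E_2^{\,w_2}$ is the identity on $V_{1/4}$ — where $J$ has order $4$ — the only power of $J$ it can equal is $J^0=I$, which (inspecting $V_{1/2}$) forces $w_1\equiv w_2\equiv0\pmod2$. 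Thus $\{\gamma:\rho_{\neq 0}(\gamma)\in\langle J\rangle\}={\rm Ker}\big(\pi_1(\Sigma)\to(\mathbb{Z}/2)^2,\ \gamma\mapsto(w_1(\gamma),w_2(\gamma))\bmod2\big)$.

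Finally, reduction modulo $4$ identifies $\widetilde{\Gamma}(W)/\Gamma(4)$ with the subgroup of ${\rm SL}_2(\mathbb{Z}/4)$ generated by the classes of $M_{1,0}=\left(\begin{smallmatrix}1&2\\0&1\end{smallmatrix}\right)$ and $M_{2,0}\equiv\left(\begin{smallmatrix}1&0\\2&1\end{smallmatrix}\right)\pmod4$; these are independent involutions, so $\widetilde{\Gamma}(W)/\Gamma(4)\cong(\mathbb{Z}/2)^2$ and the composite $\pi_1(\Sigma)\xrightarrow{\rho_0}\widetilde{\Gamma}(W)\twoheadrightarrow\widetilde{\Gamma}(W)/\Gamma(4)$ is precisely $\gamma\mapsto(w_1(\gamma),w_2(\gamma))\bmod2$. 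Since $\Gamma(4)\subset\widetilde{\Gamma}(W)$ is exactly the kernel of the reduction map, the kernel of this composite is $\rho_0^{-1}(\Gamma(4))$; combining with the second step and ${\rm Ker}(\rho_0)=0$ gives $\Gamma(W)=\Gamma(4)$. As a numerical check, $[\Gamma(2):\widetilde{\Gamma}(W)]=2$ and $[\Gamma(2):\Gamma(4)]=8$ give $[\widetilde{\Gamma}(W):\Gamma(4)]=4$, matching the index of that kernel. I expect the only real difficulty to be bookkeeping: keeping the labelling of $p_1,p_2$ consistent between the $M_{i,0}$ and the $M_{i,1/4}$, fixing the sign convention in the monodromy of \eqref{de:1st-order} (immaterial, since replacing $J$ by $J^{-1}$ does not change $\langle J\rangle$), and confirming that the degree-$\tfrac14$ and $\tfrac34$ blocks — where $\rho_{\neq 0}(\gamma_i)$ has order $4$, not $2$ — impose no further congruence: once $w_1,w_2$ are even, the exponent $k$ with $\rho_{\neq 0}(\gamma)=J^k$ is forced to be $w_1+w_2$, consistently on all three blocks.
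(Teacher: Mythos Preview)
Your argument is correct, and it takes a genuinely different route from the paper's on both halves of the lemma.

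For $\operatorname{Ker}(\rho_0)=0$, the paper argues analytically: it shows that the period map $\tau=\pi_B/\pi_A$ gives an isomorphism $\widetilde{\Sigma}\to\mathbb{H}$ (local homeomorphism by the Wronskian, proper and surjective via the $j$-invariant, hence a covering between simply connected spaces), so a nontrivial kernel would produce two points of $\widetilde{\Sigma}$ with the same $\tau$-image. You instead observe that $\widetilde{\Gamma}(W)$, being an index-$2$ subgroup of $\Gamma(2)\cong\{\pm I\}\times F_2$ missing $-I$, projects isomorphically onto $P\Gamma(2)\cong F_2$; then $\rho_0\colon F_2\twoheadrightarrow F_2$ is injective because finitely generated free groups are Hopfian. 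Your argument is slicker here, but note that it is specific to this case: in the $E_8^{(1,1)}$ setting $\pi_1(\Sigma)\cong F_3$ surjects onto $\operatorname{SL}_2(\Z)$, so the Hopfian trick is unavailable, whereas the paper's period-map argument transfers verbatim.

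For $\Gamma(W)=\Gamma(4)$, the paper writes down six explicit generators of $\Gamma(4)$, expresses each as a word in $M_{1,0},M_{2,0}$, checks that the corresponding word in $M_{1,1/4},M_{2,1/4}$ is trivial, and then closes with an index count $4\cdot 2\cdot 6=48=[\operatorname{SL}_2(\Z):\Gamma(4)]$. You instead factor everything through the abelianization: since $J,E_1,E_2$ commute, $\rho_{\neq 0}(\gamma)$ depends only on the winding numbers $(w_1,w_2)$, and the condition $\rho_{\neq 0}(\gamma)\in\langle J\rangle$ becomes $w_1\equiv w_2\equiv 0\pmod 2$; on the other side, $\widetilde{\Gamma}(W)/\Gamma(4)\cong(\Z/2)^2$ via reduction mod $4$, and the composite $\pi_1(\Sigma)\to\widetilde{\Gamma}(W)/\Gamma(4)$ is again $(w_1,w_2)\bmod 2$. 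This is cleaner and explains structurally why the answer is $\Gamma(4)$, at the cost of relying on the happy accident that all the twisted monodromies are diagonal (hence commute with $J$), which lets you bypass the generator-by-generator verification.
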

\proof
Let $\widetilde{\Si}\to \Si$ be the universal cover of $\Si$ and let
$\tau:\widetilde{\Si}\to\mathbb{H}$ be the map given by the quotient of
two periods: $\tau=\pi_B/\pi_A$. Using the Picard-Fuchs equation we
find that
\ben
d\tau = \frac{{\rm Wr}(\pi_A,\pi_B)}{\pi_A^2}\, d\si \neq 0.
\een 
By the implicit function theorem $\tau$ is a local homeomorphism. 
Using the $j$-invariant we see that the map $\tau$ is finite and
surjective. Finite maps between analytic varieties are proper. It
follows that $\tau$ is a covering. Finally, since both $\widetilde{\Si}$ and
$\mathbb{H}$ are simply connected, $\tau$ must be an isomorphism. 

The monodromy group $\widetilde{\Gamma}(W)$ induces an action on
$\mathbb{H}.$ If we assume that ${\rm Ker}\ (\rho_0)$ is non-trivial; then we can find two
different points on $\widetilde{\Si}$ such that their $\tau$-images
coincide -- contradiction. In particular, the condition
\eqref{kernels} is satisfied and the map \eqref{rho-W} is well
defined. Moreover, using our computation of the monodromy of the
twisted periods, we get that $\operatorname{Im}(\rho_W)\cong
\Z/2\times \Z/2$, i.e., we have a surjective group homomorphism 
\ben
\rho_W: \widetilde{\Gamma}(W)\longrightarrow \Z/2\times \Z/2,
\een
which implies that the index of ${\rm Ker} \ \rho_W$ in $
\widetilde{\Gamma}(W)$ is 4. On the other hand, it is easy to see
that $\Gamma(4)$ can be generated by the following (see \cite{Mu}) 6
elements
\ben
\begin{bmatrix} 1 & -4\\0 & 1\end{bmatrix},\quad
\begin{bmatrix} -3 & -4\\4 & 5\end{bmatrix},\quad
\begin{bmatrix} 1 & 0\\4 & 1\end{bmatrix},\quad
\begin{bmatrix} 9 & -4\\16 & -7\end{bmatrix},\quad
\begin{bmatrix} 5 & -4\\4 & -3\end{bmatrix},\quad
\begin{bmatrix} 9 & -16\\4 & -7\end{bmatrix},\quad
\een
We leave it to the reader to check that each of these matrices is in
the kernel of $\rho_W$. For example:
\ben
\begin{bmatrix} -3 & -4\\4 & 5\end{bmatrix}=
\left(\,(M_{2,0})^{-1}\, M_{1,0}\,\right)^2.
\een
Under $\rho_W$ the matrix $M_{i,0}$ is mapped to $M_{i,1/4}$. Using
the explicit formulas for $M_{i,1/4}$ we get that the
above matrix is in the kernel of $\rho_W$. The remaining $5$ matrices
are treated in a similar fashion. Hence we have 
\ben
\Gamma(4)\subseteq {\rm Ker} \ \rho_W \subset
\widetilde{\Gamma}(W)\subset \Gamma(2)\subset {\rm SL}_2(\Z).
\een
Since the index of $\Gamma(2)$ in $\operatorname{SL}_2(\Z)$ is $6$ (see \cite{Mi}), the index of  
${\rm Ker}\, (\rho_W)$ in $ {\rm SL}_2(\Z)$ must be $4\times 2\times 6=48$. However, the index of
$\Gamma(4)$ in $\operatorname{SL}_2(\Z)$ is also $48$. The Lemma follows.
\qed

\subsection{The Fermat $E_8^{(1,1)}$ case}
The polynomial in this case is
 \ben
f(\si,\x)=x_1^6+x_2^3+x_3^2+\si\, x_1^4 x_2.
\een
The $j$-invariant is 
\ben
j(\si)=1728\,\frac{4\si^3}{4\si^3+27}\, .
\een
Thus $C=-\frac{4}{27}$,
$l=\frac{1}{3}$ and $P(\si)=256\si^3$. 
\subsubsection{Monodromy of the invariant part}
The parameters of the hypergeometric equation \eqref{HG:1} are
$\alpha=\frac{1}{12},\beta=\frac{7}{12},\gamma=\frac{2}{3}$.
The singular points are 
$$p_i=-3\cdot 4^{-1/3}\, e^{2\pi
  \sqrt{-1}\,(i-1)/3}, \quad 1\leq  i\leq 3.$$ 
According to \eqref{modular-inv}, the corresponding monodromy
transformations are 
\ben
M_{1,0}=
\begin{bmatrix} 1 & 1 \\ 0 & 1\end{bmatrix},\quad
M_{2,0}=
\begin{bmatrix} -m & (1+m)^2 \\ -1 & 2+m\end{bmatrix},\quad
M_{3,0}=
\begin{bmatrix} 1-m & m^2 \\ -1 & 1+m\end{bmatrix},\quad
\een
where the integer $m$ depends on the choice of a symplectic basis in
$H_1(E_{\si_0};\Z)$. We choose a basis such that $m=0$ and simply denote the matrices $M_{i,0}$ by $M_i$. The above matrices generate the entire modular group, because  it is well known
that the matrices in
${\rm SL}_2(\Z)$
\ben
S:=\begin{bmatrix} 1 & 1 \\ 0 & 1\end{bmatrix}=M_1,\quad
T:=\begin{bmatrix} 0 & 1 \\ -1 & 0\end{bmatrix}= M_3\,M_1\,
M_3\, =M_1\, M_3\, M_1
\een
are generators of ${\rm SL}_2(\Z)$. It is known that  ${\rm
  SL}_2(\Z)$ has a presentation in terms of the free group on two
generators $a$ and $b$ satisfying the relations 
\ben
a b a=b a b,\quad (aba)^4=1.
\een
It follows that ${\rm Ker}\, (\rho_0)$ is the normal subgroup of the
free group on 3 generators $M_1,M_2$, and $M_3$ generated by the relations
\beq\label{ker-rho0}
M_1M_3M_1=M_3M_1M_3,\quad (M_1M_3M_1)^4=1,\quad M_1M_3=M_2M_1.
\eeq

\subsubsection{Monodromy of the twisted sector}
We fix a basis of monomials
$\phi_\r(\x)=x_1^{r_1}x_2^{r_2}x_3^{r_3}$ in the twisted sector of
$H$, where $\r=(r_1,r_2,r_3)$ is given by
\ben
\r=\{ (1,0,0),(2,0,0),(0,1,0),(1,1,0),(3,0,0),(2,1,0),(4,0,0),(5,0,0)\}.
\een
The Picard-Fuchs equations for $\Phi_\r(\si),$  $\r=(1,0,0), (5,0,0)$
have order 1
\ben
(27+4\si^3)\d_\si \, \Phi_\r = 12\si^2\, {\deg}(\phi_\r)\,
\Phi_\r.
\een
The monodromy of these equations is straightforward to compute:
\ben
\Phi_\r\mapsto e^{2\pi\sqrt{-1}\,{\deg}(\phi_\r)}\,  \Phi_\r
\een
The remaining periods satisfy three systems of differential equations.  Let
us describe them and their monodromies. In all three cases we use the
substitution $x=-4\si^3/27$ to reduce the system to a hypergeometric
equation and then the monodromy is computed in the same way as before. We recall from \cite{MS} that the first system is
\ben
\left\{
\begin{aligned}
(27+4\si^3)\d_\si \, \Phi_{k+2,0,0} & = -(k+3)\si^2\, \Phi_{k+2,0,0} -\frac{9(k+1)}{2}\Phi_{k,1,0} \\
(27+4\si^3)\d_\si \, \Phi_{k,1,0} & =  \frac{3(k+3)\si}{2}\Phi_{k+2,0,0} -  (k+1)\si^2 \Phi_{k,1,0} 
\end{aligned}
\right.
\een  
The solutions to the system have the following form:
\beq\label{fund-solution}
\begin{bmatrix} \Phi_{k+2,0,0} \\ \Phi_{k,1,0}\end{bmatrix} = 
\begin{bmatrix} 
   F_1^{(1)}  &    F_2^{(1)}\\
L_{k} F_1^{(1)}  &  L_{k} F_2^{(1)}
\end{bmatrix}
\,  \begin{bmatrix} A_{k+2,0,0} \\ A_{k,1,0}\end{bmatrix} 
\eeq
where $A_{k+2,0,0}$ and $A_{k,1,0}$ are flat sections of the vanishing cohomology
bundle and $L_k$ is a first order differential operator, 
\ben
L_{k}:=\frac{2}{3(k+3)\si}\left( (27+4\si^4)\d_{\si}\Phi_{k,1,0}+(k+1)\si^2\Phi_{k,1,0}\right),\quad k=0,1,2
\een 
The period $\Phi_{k+2,0,0}$ satisfies the hypergeometric equation with
weights 
$$(\alpha_k,\beta_k,\gamma_k)=(1/4,3/4,2/3),\quad (1/3,5/6,2/3),\quad (5/12,11/12,2/3), \quad k=0,1,2.$$
It is convenient to decompose $\lieh_{\neq 0}$ into eigenspaces $\lieh_d$ of the classical monodromy 
$ J$, where the eigenvalue corresponding to $\lieh_d$ is $e^{2\pi\sqrt{-1}d}$.
The monodromy representation $\rho_{\neq 0}$ splits accordingly into a direct sum of representations $\rho_d$ 
and we denote by $M_{i,d}=[\rho_d(\gamma_i)]$ ($i=1,2,3$) the matrices of the monodromy representation in an
appropriately chosen basis of $\lieh_d$.  
The same argument as in the $E_7^{(1,1)}$-case gives that 
\ben
M_{1,1/3} = 
\begin{bmatrix} 1 & 0\\ 0 & e^{-2\pi \sqrt{-1}/3}\end{bmatrix},\quad 
M_{2,1/3} = 
\begin{bmatrix} -\frac{i}{\sqrt{3}} &-\frac{i}{2\sqrt{3}}  \\ 
-2+\frac{2i}{\sqrt{3}}
& \frac{1}{2}-\frac{i}{2\sqrt{3}}\end{bmatrix},
\quad
M_{3,1/3} = 
\begin{bmatrix} -\frac{i}{\sqrt{3}} & -\frac{1}{4}+\frac{i}{4\sqrt{3}} \\ 
-\frac{4i}{\sqrt{3}} &
\frac{1}{2}-\frac{i}{2\sqrt{3}}\end{bmatrix}.
\een
Using computer software one can check that the group generated by
these matrices is finite of order 24. Alternatively, since the exponents of the
hypergeometric equation are
\ben
\la_0=1-\gamma=\frac{1}{3},\quad \la_1=\gamma-\alpha-\beta =
\frac{1}{3},\quad
\la_\infty = \beta-\alpha=\frac{1}{2},
\een
we find that the hypergeometric equation is in the Schwarz list and
that its monodromy group is known to be isomorphic to $A_4\times
\Z/2$. It has order 24. 

From the system for $(\Phi_{300},\Phi_{110})$ we get 
\ben
M_{1,1/2} = 
\begin{bmatrix} 1 & 0\\ 0 & e^{-2\pi \sqrt{-1}/2}\end{bmatrix},\quad 
M_{2,1/2} = 
\begin{bmatrix} -\frac{1}{2} & -\frac{i}{2\sqrt{3}} \\ 
\frac{3\sqrt{3}i}{2} & \frac{1}{2}
\end{bmatrix},
\quad
M_{3,1/2} = 
\begin{bmatrix} -\frac{1}{2} & \frac{i}{2\sqrt{3}} \\ 
 -\frac{3\sqrt{3}i}{2}&
\frac{1}{2}\end{bmatrix}.
\een
It is easy to check that these matrices generate a group with $6$
elements. Again, we can obtain this from the general theory, since the exponents  are
\ben
\la_0=1-\gamma=\frac{1}{3},\quad \la_1=\gamma-\alpha-\beta =
-\frac{1}{2},\quad
\la_\infty = \beta-\alpha=\frac{1}{2}
\een
and the hypergeometric equation is again in the Schwarz list. The
monodromy group is known to be the dihedral group $D_6$. 

Finally, from the system for $(\Phi_{400},\Phi_{210})$ we get
 \ben
M_{1,2/3} = 
\begin{bmatrix} 1 & 0\\ 0 & e^{-4\pi \sqrt{-1}/3}\end{bmatrix},\quad 
M_{2,2/3} = 
\begin{bmatrix} 
\frac{i}{\sqrt{3}} & -\frac{\sqrt{3}i}{8}  \\ 
\frac{8}{3}+\frac{8\sqrt{3}i}{9}& \frac{1}{2}+\frac{i}{2\sqrt{3}}\end{bmatrix}, \quad
M_{3,2/3} = 
\begin{bmatrix} 
\frac{i}{\sqrt{3}} & \frac{3}{16}+\frac{\sqrt{3}i}{16} \\ 
-\frac{16i}{3\sqrt{3}} &\frac{1}{2}+\frac{\sqrt{3}i}{6}  
\end{bmatrix}.
\een
Again the matrices generate a group of order 24, which can also be proved by the general theory, since the exponents are
\ben
\la_0=1-\gamma=\frac{1}{3},\quad \la_1=\gamma-\alpha-\beta =
-\frac{2}{3},\quad
\la_\infty = \beta-\alpha=\frac{1}{2}.
\een
Again, the monodromy group is isomorphic to $A_4\times \Z/2$. One can check that
\beq\label{intertwiner}
M^{-1}_{1,2/3} = S\, M_{1,1/3}\, S^{-1},\quad 
M^{-1}_{2,2/3} = S\, M_{3,1/3}\, S^{-1},\quad 
M^{-1}_{3,2/3} = S\, M_{2,1/3}\, S^{-1},
\eeq
where $S$ is the diagonal matrix with diagonal entries $-4/3$ and $1$.
\begin{lemma}\label{gamma6}
The condition \eqref{kernels} is satisfied and $\Gamma(W)=\Gamma(6)$. 
\end{lemma}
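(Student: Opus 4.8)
The plan is to follow the template of Lemma~\ref{mod-group:x9}, with one extra preliminary step: since here $\widetilde\Gamma(W)={\rm SL}_2(\Z)$, the map $\rho_0$ is far from injective and \eqref{kernels} is not automatic. By the presentation of ${\rm SL}_2(\Z)$ used above, ${\rm Ker}(\rho_0)$ is the normal closure in $\pi_1(\Si)=\langle\gamma_1,\gamma_2,\gamma_3\rangle$ of the three words coming from \eqref{ker-rho0},
\[
w_1=\gamma_1\gamma_3\gamma_1(\gamma_3\gamma_1\gamma_3)^{-1},\qquad
w_2=(\gamma_1\gamma_3\gamma_1)^4,\qquad
w_3=\gamma_1\gamma_3\gamma_1^{-1}\gamma_2^{-1},
\]
so, since ${\rm Ker}(\overline{\rho}_{\neq 0})$ is normal, it is enough to check $\rho_{\neq 0}(w_j)\in\langle J\rangle$ for $j=1,2,3$. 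I would split $\lieh_{\neq 0}$ into the $J$-eigenspaces $\lieh_{1/6},\lieh_{1/3},\lieh_{1/2},\lieh_{2/3},\lieh_{5/6}$. On the one-dimensional blocks $\lieh_{1/6}$ and $\lieh_{5/6}$ each $\gamma_i$ acts by one and the same root of unity of order $6$, so $\rho_{\neq 0}(w_j)$ acts there by that root of unity raised to the total $\gamma$-exponent of $w_j$, namely $0$ for $w_1,w_3$ and $12$ for $w_2$, hence trivially; and since $J$ has order $6$ and acts on $\lieh_{1/6}$ by a primitive $6$-th root of unity, any element of $\langle J\rangle$ trivial there must be $I$. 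This reduces \eqref{kernels} to showing that $\rho_{\neq 0}(w_j)$ is the identity on each of the three two-dimensional blocks. Using the explicit matrices $M_{i,1/3},M_{i,1/2}$ above, together with the intertwiner \eqref{intertwiner} to transfer the $\lieh_{2/3}$ block to the $\lieh_{1/3}$ one, a direct $2\times 2$ computation shows that in each of these blocks the braid relation $M_{1,d}M_{3,d}M_{1,d}=M_{3,d}M_{1,d}M_{3,d}$, the relation $(M_{1,d}M_{3,d}M_{1,d})^4=I$, and the relation $M_{2,d}=M_{1,d}M_{3,d}M_{1,d}^{-1}$ all hold; these are precisely the equalities $\rho_{\neq 0}(w_1)|_{\lieh_d}=\rho_{\neq 0}(w_2)|_{\lieh_d}=\rho_{\neq 0}(w_3)|_{\lieh_d}=I$. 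This establishes \eqref{kernels}, so $\rho_W\colon{\rm SL}_2(\Z)\to{\rm GL}(\lieh_{\neq 0})/\langle J\rangle$ is well defined.

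With $\rho_W$ in hand I would prove $\Gamma(6)\subseteq{\rm Ker}(\rho_W)$ exactly as for $\Gamma(4)$ in Lemma~\ref{mod-group:x9}: fix a finite generating set of $\Gamma(6)$ — necessarily larger and less familiar than the six generators used for $\Gamma(4)$, since $[{\rm SL}_2(\Z):\Gamma(6)]=|{\rm SL}_2(\Z/6)|=144$ — write each generator as a word in $M_1,M_2,M_3$, lift it to the corresponding word in $\gamma_1,\gamma_2,\gamma_3$, and evaluate $\rho_{\neq 0}$ on it using the matrices above; in each case the outcome should be a power of $J$, recovered as before from its action on $\lieh_{1/6}$, so that the generator lies in ${\rm Ker}(\rho_W)$. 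It follows that $\rho_W$ factors through ${\rm SL}_2(\Z/6)\cong{\rm SL}_2(\Z/2)\times{\rm SL}_2(\Z/3)$; in particular $|{\rm Im}(\rho_W)|\le 144$.

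For the opposite estimate I would identify ${\rm Im}(\rho_W)$ blockwise. Composing $\rho_W$ with the projection to the $\lieh_{1/2}$ block and dividing by the scalar $\langle J\rangle$-action (which on this block is $\{\pm I\}$) gives a surjection of ${\rm Im}(\rho_W)$ onto the $\lieh_{1/2}$ monodromy group; the latter is $D_6\cong S_3$, has trivial centre, hence maps in isomorphically, and $S_3\cong{\rm SL}_2(\Z/2)$. Composing instead with the projection to the $\lieh_{1/3}$ block and dividing by the scalar $\langle J\rangle$-action (of order $3$ there) gives a surjection onto the $\lieh_{1/3}$ monodromy group — a group of order $24$, faithfully represented on $\C^2$, with tetrahedral image in ${\rm PGL}_2(\C)$ and containing an element of order $4$: this is the binary tetrahedral group $2T\cong{\rm SL}_2(\Z/3)$, whose only scalars are $\pm I$, so it too maps in isomorphically. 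Since ${\rm SL}_2(\Z/2)$ and ${\rm SL}_2(\Z/3)$ have coprime abelianizations $\Z/2$ and $\Z/3$, they share no non-trivial quotient, and Goursat's lemma then forces ${\rm Im}(\rho_W)$ to surject onto the full product ${\rm SL}_2(\Z/2)\times{\rm SL}_2(\Z/3)={\rm SL}_2(\Z/6)$, of order $144$. Together with the bound $|{\rm Im}(\rho_W)|\le 144$ this gives ${\rm Im}(\rho_W)\cong{\rm SL}_2(\Z/6)$, hence $[{\rm SL}_2(\Z):{\rm Ker}(\rho_W)]=144=[{\rm SL}_2(\Z):\Gamma(6)]$, and combined with $\Gamma(6)\subseteq{\rm Ker}(\rho_W)$ this yields ${\rm Ker}(\rho_W)=\Gamma(6)$.

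The bulk of the proof — the $2\times 2$ matrix manipulations in the first two paragraphs, and in particular keeping track of the exact power of $J$ contributed by each of the many generators of $\Gamma(6)$ across all five eigenspace blocks at once — is routine but voluminous and is best carried out with computer algebra, as elsewhere in this section. The one genuinely delicate point is in the last paragraph: one must check that the $\lieh_{1/2}$ and $\lieh_{1/3}$ blocks really glue to the whole of ${\rm SL}_2(\Z/6)$ and not to some proper subgroup of the same order, which rests on the $\lieh_{1/3}$ block being the binary tetrahedral group ${\rm SL}_2(\Z/3)$ — a non-split central extension of $A_4$ — rather than a split one such as $A_4\times\Z/2$; here the intertwiner \eqref{intertwiner} is useful, since it shows that the $\lieh_{2/3}$ block contributes nothing new.
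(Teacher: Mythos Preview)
Your argument is correct, but the paper takes a markedly shorter route for the second half.  Instead of working directly with $\Gamma(6)$, the paper computes the kernel of each block homomorphism $\rho_{W,d}$ separately: it checks that the three standard generators of $\Gamma(3)$ lie in ${\rm Ker}(\rho_{W,1/3})$ and ${\rm Ker}(\rho_{W,2/3})$, and (similarly) that $\Gamma(2)\subset{\rm Ker}(\rho_{W,1/2})$; since each blockwise monodromy group has order $24$, $24$, $6$ matching the indices $[{\rm SL}_2(\Z):\Gamma(3)]$ and $[{\rm SL}_2(\Z):\Gamma(2)]$, these inclusions are equalities, and then
\[
\Gamma(W)={\rm Ker}(\rho_{W,1/3})\cap{\rm Ker}(\rho_{W,1/2})\cap{\rm Ker}(\rho_{W,2/3})=\Gamma(3)\cap\Gamma(2)=\Gamma(6).
\]
This replaces your large generating set for $\Gamma(6)$ by three generators of $\Gamma(3)$ (plus the analogous check for $\Gamma(2)$), and it sidesteps Goursat's lemma entirely: the fact that the blocks glue to the full product is never needed, because the intersection of the blockwise kernels already pins down $\Gamma(6)$.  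Your treatment of \eqref{kernels} is essentially the same as the paper's, with the added bookkeeping on the one-dimensional blocks $\lieh_{1/6},\lieh_{5/6}$ that the paper leaves implicit.

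One remark on the ``delicate point'' you flag.  You are right that your Goursat argument requires the $\lieh_{1/3}$ monodromy group to be the binary tetrahedral group $2T\cong{\rm SL}_2(\Z/3)$ rather than $A_4\times\Z/2$, and in fact this is forced: $A_4\times\Z/2$ has no faithful two-dimensional representation (all its $2$-dimensional representations factor through the abelianization), so a finite monodromy group inside ${\rm GL}_2(\C)$ with tetrahedral projective image and order $24$ must be $2T$.  The paper's identification of this group as $A_4\times\Z/2$ is therefore a slip, but it is harmless for the paper's own argument, which uses only the order $24$.
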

\proof
For the first part of the Lemma, it is enough to check that the
matrices $M_{i,d}$, $1\leq i\leq 3$, $d=1/3,1/2$ satisfy the relations
\eqref{ker-rho0}. Note that due to \eqref{intertwiner} $M_{i,2/3}$
also satisfy the relations. This is a straightforward computation,
which we omit. 

The homomorphism $\rho_W$ decomposes naturally into
$(\rho_{W,1/3},\rho_{W,1/2},\rho_{W,2/3})$, where 
\ben
\rho_{W,d}: \widetilde{\Gamma}(W)\to {\rm GL}(\, \lieh_d \, )/\mu_6,\quad d=1/3,1/2,2/3.
\een
We claim that ${\rm Ker}(\rho_{W,1/3})={\rm Ker}(\rho_{W,2/3})=\Gamma(3).$ 
It is easy to find that $\Gamma(3)$ is generated by
the following matrices:
\ben
\begin{bmatrix} 1 & 3 \\ 0 & 1\end{bmatrix},\quad 
\begin{bmatrix}  1 & 0 \\ -3 & 1\end{bmatrix},\quad
\begin{bmatrix} -2 & 3 \\ -3 & 4\end{bmatrix}\, .
\een 
We leave it to the reader to check that they belong to the kernel of
$\rho_{W,d}$ for $d=1/3$ and $d=2/3$. Our claim follows, because the index of $\Gamma(3)$ in
${\rm SL}_2(\Z)$ is 24 (see \cite{Mi}). 
Similarly one can verify that ${\rm Ker}\, (\rho_{W,1/2})=\Gamma(2)$, so we get
\ben
\Gamma(W) ={\rm Ker}\ (\rho_W)={\rm Ker}\, (\rho_{W,1/3})\cap  {\rm Ker}\, (\rho_{W,1/2})\cap {\rm
  Ker}\, (\rho_{W,2/3})= \Gamma(3)\cap \Gamma(2)= \Gamma(6).
\qed
\een

\end{document}